\theoremstyle{plain}
\newtheorem{thm}{Theorem}[section]
\newtheorem{prop}[thm]{Proposition}
\newtheorem{conj}[thm]{Conjecture}
\theoremstyle{definition}
\theoremstyle{definition}
\theoremstyle{remark}
\numberwithin{equation}{section}
\title{A $q$-analogue of the matrix sixth Painlev\'e system}
\author{Hiroshi KAWAKAMI\thanks{\texttt{kawakami@gem.aoyama.ac.jp}}}
\affil{College of Science and Engineering, Aoyama gakuin university,
5-10-1 Fuchinobe, Chuo-ku, Sagamihara-shi, Kanagawa 252-5258, Japan.}
\date{}
\begin{document}
\maketitle

\begin{abstract}
We derive a $q$-analogue of the matrix sixth Painlev\'e system via a connection-preserving deformation of
a certain Fuchsian linear $q$-difference system.
In specifying the linear $q$-difference system,
we utilize the correspondence between linear differential systems and linear $q$-difference systems
from the viewpoint of the spectral type.
The system of non-linear $q$-difference equations thus obtained can also be regarded as
a non-abelian analogue of Jimbo-Sakai's $q$-$P_{\mathrm{VI}}$.
\end{abstract}

\paragraph{Mathematics Subject Classifications (2010).}34M55, 34M56, 33E17
\paragraph{Key words.}$q$-difference equation, connection-preserving deformation, isomonodromic deformation,
Painlev\'e-type equation, integrable system.


\section{Introduction}\label{sec:intro}

The Painlev\'e equations are non-linear second order ordinary differential equations that define new special functions.
Originally, the Painlev\'e equations were classified into six equations.
We denote them by $P_{\mathrm{I}},\ldots,P_{\mathrm{VI}}$.
Among the Painlev\'e equations, the sixth Painlev\'e equation $P_{\mathrm{VI}}$ is the ``source'' in the sense that
all the other Painlev\'e equations can be obtained from $P_{\mathrm{VI}}$ through degenerations.

Since the 1990s, there have been many generalizations of the Painlev\'e equations in the literature
such as discretizations, higher dimensional analogues, quantizations, and so on.
As for second order (or two-dimensional) discrete Painlev\'e equations,
there is an algebro-geometric theory~\cite{Sak2001} which provides a unified account of them.
According to the theory, they fall into three types of equations, namely,
additive difference, multiplicative difference ($q$-difference),
and elliptic difference equations.
The Painlev\'e (differential) equations are understood through the continuous limit of those discrete Painlev\'e equations.
In this sense we can say that discrete Painlev\'e equations are more fundamental than the Painlev\'e differential equations.

Recently, Painlev\'e-type differential equations with four-dimensional phase space are classified from the viewpoint of isomonodromic deformations of linear differential equations~\cite{HKNS, K3, K4, K5}.
This study shows that,
in the four-dimensional case, there are four ``sources'' as extensions of the sixth Painlev\'e equation.
Namely, they are
\begin{itemize}
\item the Garnier system~\cite{G}, which is a classically known multivariate extension of $P_{\mathrm{VI}}$,

\item the Fuji-Suzuki-Tsuda system~\cite{FS1, Ts2014}, which is an extension of $P_{\mathrm{VI}}$ with the affine Weyl group symmetry of type $A$,

\item the Sasano system~\cite{Ss}, which is an extension of $P_{\mathrm{VI}}$ with the affine Weyl group symmetry of type $D$,

\item the matrix sixth Painlev\'e system~\cite{B2, HKNS}, which is a non-abelian extension of $P_{\mathrm{VI}}$.
\end{itemize}
Note that each of the four equations has its extensions defined in arbitrary even dimensions.
These four families are expected to play an important role in the theories of integrable systems, special functions and so on.

Our next purpose is,
motivated by the two-dimensional case,
to develop a unified framework of discrete Painlev\'e-type equations in higher dimensions.
As a first step, we investigate a correspondence of
Painlev\'e-type differential equations and discrete Painlev\'e-type equations.

Among the above four families, $q$-analogues of the Garnier system,
the Fuji-Suzuki-Tsuda system, and the Sasano system have already been constructed and studied by several authors~\cite{Sak2005, Sz, Ts2010, Ma}.
The aim of this paper is to obtain a $q$-analogue of the matrix sixth Painlev\'e system (abbreviated to matrix $P_{\mathrm{VI}}$).

The matrix $P_{\mathrm{VI}}$ is derived from an isomonodromic deformation of a certain Fuchsian differential equation.
The linear equation is specified in terms of the spectral type.
The spectral type of a Fuchsian differential equation is data on the multiplicities of the characteristic exponents of a Fuchsian differential equation~\cite{Os} (see Section~\ref{sec:mpVI}).
On the other hand,
the notion of spectral type is also defined for Fuchsian linear $q$-difference equations~\cite{SY}.
Thus we have the spectral type for both Fuchsian differential and $q$-difference equations.

The author observed the relationship between the two spectral type (see Conjecture~\ref{thm:correspondence_of_ST}),
and this correspondence is expected to be applicable to a construction of a $q$-analogue of a given isomonodromic deformation equation.
That is, suppose we are given an isomonodromic deformation equation of a Fuchsian differential equation.
Specify a linear $q$-difference equation which corresponds to the Fuchsian differential equation in terms of the spectral type.
Then consider the connection-preserving deformation of the linear $q$-difference equation.
The connection-preserving deformation~\cite{JS} is a discrete counterpart of the isomonodromic deformation of linear differential equations.
This paper provides an example of the construction.

This paper is organized as follows.
In Section~\ref{sec:mpVI}, we review the matrix sixth Painlev\'e system.
In Section~\ref{sec:LqDE}, we explain the connection-preserving deformation of a linear $q$-difference system and
the spectral type for Fuchsian $q$-difference systems.
We also present a conjecture concerning a correspondence between the spectral type of $q$-difference systems and that of differential systems.
In Section~\ref{sec:derivation}, we derive a $q$-analogue of the matrix $P_{\mathrm{VI}}$.
In Section~\ref{sec:CL}, we show that the resulting system of non-linear $q$-difference equations
actually has a continuous limit to the matrix $P_{\mathrm{VI}}$.

\section{The matrix sixth Painlev\'e system}\label{sec:mpVI}

In this section we review the matrix $P_{\mathrm{VI}}$~\cite{K2, K5}.
The matrix $P_{\mathrm{VI}}$ is derived from the isomonodromic deformation of a certain Fuchsian differential system.
First we give a brief account of the isomonodromic deformation of Fuchsian systems.

Consider a system of Fuchsian differential equations:
\begin{equation}\label{eq:linear_system}
\frac{dY}{dx}=
\mathcal{A}(x, u)Y, \quad
\mathcal{A}(x, u)=\sum_{j=1}^n \frac{\mathcal{A}_j}{x-u_j} \quad (\mathcal{A}_j \in M_m(\mathbb{C})).
\end{equation}
We put $\mathcal{A}_\infty:=-\sum_{j=1}^n \mathcal{A}_j$.
For simplicity, we assume that $\mathcal{A}_j$'s $(j=1, \ldots, n, \infty)$ are semisimple
and that $\mathcal{A}_\infty$ is diagonal.
Positions of the singular points $u=(u_1, \dots, u_n)$ are regarded as deformation parameters.
It is known that the normalized local solution of \eqref{eq:linear_system} at $x=\infty$ is an isomonodromic solution
(that is, associated monodromy matrices are independent of $u$) if and only if
the following system is completely integrable:
\begin{align}
\left\{
\begin{aligned}
\frac{\partial Y}{\partial x}&=\mathcal{A}(x,u)Y, \\
\frac{\partial Y}{\partial u_i}&=\mathcal{B}_i(x,u)Y, \quad \mathcal{B}_i(x,u)=-\frac{\mathcal{A}_i}{x-u_i} \quad (i=1,\ldots,n).
\end{aligned}
\right.
\end{align}
Then the compatibility condition of the above
\begin{equation}
\left\{
\begin{aligned}
&\frac{\partial \mathcal{A}(x,u)}{\partial u_i}-\frac{\partial \mathcal{B}_i(x,u)}{\partial x}+[\mathcal{A}(x,u), \mathcal{B}_i(x,u)]=O \\
&\frac{\partial \mathcal{B}_i(x,u)}{\partial u_j}-\frac{\partial \mathcal{B}_j(x,u)}{\partial u_i}+[\mathcal{B}_i(x,u), \mathcal{B}_j(x,u)]=O
\end{aligned}
\right.
\end{equation}
gives a system of non-linear differential equations satisfied by the entries of $\mathcal{A}_i$'s.
Here we put $[A, B]:=AB-BA$.
We can write the compatibility condition more explicitly as follows:
\begin{equation}\label{eq:Sch}
\left\{
\begin{aligned}
\frac{\partial A_i}{\partial u_j}&=
\frac{[A_j, A_i]}{u_j-u_i} \quad (j \neq i),\\
\frac{\partial A_i}{\partial u_i}&=
-\sum_{j \neq i}\frac{[A_j, A_i]}{u_j-u_i}.
\end{aligned}
\right.
\end{equation}
This system is the isomonodromic deformation equation of \eqref{eq:linear_system},
which is called the Schlesinger system.

Fuchsian systems are classified by their \textit{spectral types}.
Let the eigenvalues of $\mathcal{A}_j$ be $\theta^j_k \ (k=1, \ldots, \ell_j)$, and
let their multiplicities be $m^j_k \ (k=1, \ldots, \ell_j)$ respectively.
Regarding $\mathcal{P}_j=m^j_1, \ldots, m^j_{\ell_j}$ as a partition of $m$,
we have a $(n+1)$-tuple of partitions $\mathcal{P}_1 \, ; \, \ldots \, ; \, \mathcal{P}_n \, ; \, \mathcal{P}_\infty$.
We call this $(n+1)$-tuple of partitions of $m$ the spectral type of \eqref{eq:linear_system}.

The matrix $P_{\mathrm{VI}}$ is derived from the isomonodromic deformation of the following Fuchsian system:
\begin{equation}\label{eq:Fuchs_mp}
\frac{dY}{dx}=
\mathcal{A}(x, t)Y, \quad
\mathcal{A}(x, t)=
\frac{\mathcal{A}_0}{x}+\frac{\mathcal{A}_1}{x-1}+\frac{\mathcal{A}_t}{x-t},
\end{equation}
where $\mathcal{A}_0$, $\mathcal{A}_1$, and $\mathcal{A}_t$ are $2m \times 2m$ matrices satisfying the following conditions
\begin{equation}\label{eq:eigen_condition}
\mathcal{A}_0 \sim
\begin{pmatrix}
O_m & O_m \\
O_m & \theta^0 I_m
\end{pmatrix},\quad
\mathcal{A}_1 \sim
\begin{pmatrix}
O_m & O_m \\
O_m & \theta^1 I_m
\end{pmatrix},\quad
\mathcal{A}_t \sim
\begin{pmatrix}
O_m & O_m \\
O_m & \theta^t I_m
\end{pmatrix},
\end{equation}
and
\begin{equation}\label{residue_infty}
\mathcal{A}_\infty=-(\mathcal{A}_0+\mathcal{A}_1+\mathcal{A}_t)=
\mathrm{diag}(\overbrace{\theta^\infty_1, \ldots ,\theta^\infty_1}^{m}, \overbrace{\theta^\infty_2, \ldots, \theta^\infty_2}^{m-1}, \theta^\infty_3).
\end{equation}
Thus the spectral type of the Fuchsian system \eqref{eq:Fuchs_mp} is $m,m \, ; \, m,m \, ; \, m,m \, ; \, m,m-1,1$
and the number of accessory parameters is $2m$.
Taking the trace of \eqref{residue_infty}, we have the Fuchs relation
\begin{equation}
m(\theta^0+\theta^1+\theta^t+\theta^\infty_1)
+(m-1)\theta^\infty_2+\theta^\infty_3=0.
\end{equation}
The linear systems satisfying the conditions \eqref{eq:eigen_condition} and \eqref{residue_infty} can be parametrized as follows~(\cite{K2}):
\begin{equation}\label{eq:LDE_mPVI_res}
\begin{split}
\mathcal{A}_{\xi}&=
(U \oplus I_m)^{-1}X^{-1}\hat{\mathcal{A}}_{\xi}X(U \oplus I_m)
\quad(\xi=0,1,t),\\
\hat{\mathcal{A}}_0&=
\begin{pmatrix}
I_m \\
O_m
\end{pmatrix}
\begin{pmatrix}
\theta^0 I_m & \frac{1}{t}Q-I_m
\end{pmatrix},\quad
\hat{\mathcal{A}}_1=
\begin{pmatrix}
I_m \\
PQ-\Theta
\end{pmatrix}
\begin{pmatrix}
\theta^1 I_m-PQ+\Theta & I_m
\end{pmatrix},\\
\hat{\mathcal{A}}_t&=
\begin{pmatrix}
I_m \\
tP
\end{pmatrix}
\begin{pmatrix}
\theta^t I_m+QP & -\frac{1}{t}Q
\end{pmatrix},
\end{split}
\end{equation}
where the matrix $\Theta$ is given by
\begin{align}
\Theta=
\begin{pmatrix}
\theta^\infty_2 I_{m-1}& O\\
O & \theta^\infty_3
\end{pmatrix}.
\end{align}
The matrices $Q$ and $P$ satisfy the following commutation relation
\begin{equation}\label{eq:canonical_var}
[P,Q]=(\theta+\theta^\infty_1)I_m+\Theta.
\end{equation}
Here we put $\theta=\theta^0+\theta^1+\theta^t$.
Hereafter we often write a scalar matrix $kI$ as $k$ if there is no danger of confusion.
The matrix $U \in \mathrm{GL}_m(\mathbb{C})$ is a gauge variable,
and $X$ is given by
$X=
\begin{pmatrix}
I_m & O \\
Z & I_m
\end{pmatrix}$
where
\begin{align}
Z&=(\theta^{\infty}_1-\Theta)^{-1}
[-\theta^1(QP+\theta+\theta^{\infty}_1)
+(QP+\theta+\theta^{\infty}_1)^2-t(PQ+\theta^t)P].
\end{align}

As mentioned in the above,
the isomonodromic deformation of \eqref{eq:Fuchs_mp} is equivalent to the compatibility condition of the following Lax pair:
\begin{equation}\label{eq:Lax_mp6}
\left\{
\begin{aligned}
\frac{\partial Y}{\partial x}&=
\mathcal{A}(x, t)Y, \\
\frac{\partial Y}{\partial t}&=\mathcal{B}(x, t)Y,
\end{aligned}
\right.
\end{equation}
where $\mathcal{B}(x, t)=-\frac{\mathcal{A}_t}{x-t}$.
Then the compatibility condition
\begin{equation}
\frac{\partial \mathcal{A}(x,t)}{\partial t}-\frac{\partial \mathcal{B}(x,t)}{\partial x}+[\mathcal{A}(x,t), \mathcal{B}(x,t)]=O
\end{equation}
of the Lax pair \eqref{eq:Lax_mp6} can be explicitly written as
{\small
\begin{align}
t(t-1)\frac{dQ}{dt}&=(Q-t)PQ(Q-1)+Q(Q-1)P(Q-t) \nonumber \\
&\quad+(\theta^0+1)Q(Q-1)+(\theta+2\theta^\infty_1-1)Q(Q-t)+\theta^t(Q-1)(Q-t), \label{eq:MP6-q}\\
t(t-1)\frac{dP}{dt}&=-(Q-1)P(Q-t)P-P(Q-t)PQ-PQ(Q-1)P \nonumber \\
&\quad-\left[(\theta^0+1)\{P(Q-1)+QP\}+(\theta+2\theta^\infty_1-1)\{P(Q-t)+QP\}+\theta^t\{P(Q-t)+(Q-1)P\}\right] \nonumber \\
&\quad-(\theta+\theta^\infty_1)(\theta^0+\theta^t+\theta^\infty_1), \label{eq:MP6-p}\\
t(t-1)\frac{dU}{dt}&=\left\{(Q-t)(PQ+QP)+(2\theta^0+\theta^1+2\theta^t+2\theta^\infty_1)Q-\theta^t t\right\}U.
\end{align}
}
Moreover, equations \eqref{eq:MP6-q} and \eqref{eq:MP6-p} can be written in the following form:
\begin{align}\label{eq:matPVI_Hamform}
\frac{dq_{ij}}{dt}&=\frac{\partial H^{\mathrm{Mat},m}_{\mathrm{VI}}}{\partial p_{ji}},\quad
\frac{dp_{ij}}{dt}=-\frac{\partial H^{\mathrm{Mat},m}_{\mathrm{VI}}}{\partial q_{ji}}\quad (i, j=1,\ldots,m)
\end{align}
in terms of the entries of $Q=(q_{ij})$ and $P=(p_{ij})$.
The Hamiltonian is given by
\begin{multline}
t(t-1)H^{\mathrm{Mat},m}_{\mathrm{VI}}
\\=
\mathrm{tr}\Big[Q(Q-1)(Q-t)P^2+
\{(\theta^0+1 -(\theta+\theta^\infty_1+\Theta))Q(Q-1)+\theta^t(Q-1)(Q-t)
\\
+(\theta+2\theta^\infty_1-1)Q(Q-t)\}P
+(\theta+\theta^\infty_1)(\theta^0+\theta^t+\theta^\infty_1)Q\Big].
\end{multline}
We call the equations \eqref{eq:MP6-q} and \eqref{eq:MP6-p}, or equivalently the Hamiltonian system \eqref{eq:matPVI_Hamform},
\textit{the matrix sixth Painlev\'e system} (matrix $P_{\mathrm{VI}}$).
We construct a $q$-analogue of the equations \eqref{eq:MP6-q} and \eqref{eq:MP6-p} in Section~\ref{sec:derivation}.

\section{Linear $q$-difference systems and their deformations}\label{sec:LqDE}
In this section,
we collect some facts about linear $q$-difference equations that will be used later.
\subsection{Connection-preserving deformation of linear $q$-difference systems}
Let $q$ be a complex number satisfying $0 < |q| < 1$.
We consider a system of linear $q$-difference equations
\begin{equation}\label{eq:LqDE}
Y(qx)=A(x)Y(x),
\end{equation}
where $A(x)=A_N x^N+\cdots+A_1x+A_0$ is an $m \times m$ matrix with polynomial entries.
We assume that the matrices $A_N$ and $A_0$ are semisimple and invertible.
Using a gauge transformation $Y(x) \to GY(x)$ with a constant matrix $G$,
we can assume that $A_N$ is diagonal without loss of generality.
We set
\begin{equation}
A_0=G_0\,\mathrm{diag}(\theta_1, \ldots, \theta_m)\,G_0^{-1}, \quad
A_N=\mathrm{diag}(\kappa_1, \ldots, \kappa_m)
\end{equation}
and
\begin{equation}
D_0=\frac{1}{\log q}\mathrm{diag}( \log \theta_1, \ldots, \log \theta_m ), \quad
D_\infty=\frac{1}{\log q}\mathrm{diag}( \log \kappa_1, \ldots, \log \kappa_m).
\end{equation}
We also assume the non-resonant condition, that is, for any $i, j$
\begin{equation}
\theta_j / \theta_i, \ \kappa_j / \kappa_i \notin q^{\mathbb{Z}_{\ge 1}}=\{ q^n \mid n \in \mathbb{Z}_{\ge 1} \}.
\end{equation}
Under the above assumptions,
it is known that the system \eqref{eq:LqDE} has the following solutions at $x=0$ and $x=\infty$:
\begin{prop}[\cite{Car}]\label{thm:local_sol}
The system \eqref{eq:LqDE} has the following solutions
\begin{align}
Y_0(x)&=G_0\hat{Y}_0(x)x^{D_0}, \\
Y_\infty(x)&=q^{\frac{N}{2}u(u-1)}\hat{Y}_\infty(x)x^{D_\infty} \quad (u=\log x/\log q),
\end{align}
where $\hat{Y}_0(x)$ (resp. $\hat{Y}_\infty(x)$) is a invertible matrix
whose entries are holomorphic at $x=0$ (resp. $x=\infty$) and
satisfies $\hat{Y}_0(0)=I_m$ (resp. $\hat{Y}_\infty(\infty)=I_m$).
\end{prop}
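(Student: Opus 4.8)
The plan is to construct $Y_0$ and $Y_\infty$ by the classical recursive method for linear $q$-difference systems (cf.~\cite{Car}), treating the two singular points in parallel. Consider first $x=0$. Substituting the ansatz $Y(x)=G_0\hat{Y}_0(x)x^{D_0}$ into \eqref{eq:LqDE} and using $q^{D_0}=\Theta_0:=\mathrm{diag}(\theta_1,\dots,\theta_m)$ together with $(qx)^{D_0}=\Theta_0\,x^{D_0}$, one reduces the equation to
\[
\hat{Y}_0(qx)=\widetilde{A}(x)\,\hat{Y}_0(x)\,\Theta_0^{-1},\qquad \widetilde{A}(x):=G_0^{-1}A(x)G_0,\quad\widetilde{A}(0)=\Theta_0 .
\]
Writing $\hat{Y}_0(x)=\sum_{k\ge0}Y_kx^k$ with $Y_0=I_m$ and $\widetilde{A}(x)=\sum_{j=0}^N\widetilde{A}_jx^j$, and comparing coefficients of $x^k$, one obtains the recursion
\[
q^kY_k-\Theta_0Y_k\Theta_0^{-1}=\sum_{j=1}^{\min(k,N)}\widetilde{A}_jY_{k-j}\Theta_0^{-1}\qquad(k\ge1),
\]
whose right-hand side involves only $Y_0,\dots,Y_{k-1}$.

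The key point is that the linear operator $\mathcal{L}_k\colon Z\mapsto q^kZ-\Theta_0Z\Theta_0^{-1}$ on $M_m(\mathbb{C})$ is invertible for every $k\ge1$: in the basis of matrix units it is diagonal with eigenvalues $q^k-\theta_a/\theta_b$, and $q^k\ne1$ because $0<|q|<1$ (case $a=b$), while $q^k\ne\theta_a/\theta_b$ for $a\ne b$ is exactly the non-resonance hypothesis $\theta_a/\theta_b\notin q^{\mathbb{Z}_{\ge1}}$. Hence $Y_k=\mathcal{L}_k^{-1}\bigl(\sum_{j\ge1}\widetilde{A}_jY_{k-j}\Theta_0^{-1}\bigr)$ is uniquely determined. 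Since $q^k\to0$, the eigenvalues of $\mathcal{L}_k$ tend to $1$ (for $a=b$) and to $-\theta_a/\theta_b\ne0$ (for $a\ne b$), so $\|\mathcal{L}_k^{-1}\|$ is bounded as $k\to\infty$ and therefore $\sup_{k\ge1}\|\mathcal{L}_k^{-1}\|<\infty$; the recursion then gives $\|Y_k\|\le C\max_{1\le j\le N}\|Y_{k-j}\|$ for a constant $C$, and an elementary majorant argument yields $\|Y_k\|\le R^k$ for some $R>0$. Thus $\hat{Y}_0$ converges on a neighbourhood of $x=0$, is holomorphic there, satisfies $\hat{Y}_0(0)=I_m$, and is invertible near $0$ since $\det\hat{Y}_0(0)=1$; by construction $Y_0(x)=G_0\hat{Y}_0(x)x^{D_0}$ solves \eqref{eq:LqDE}.

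The point $x=\infty$ is handled the same way after the substitution $Y(x)=q^{\frac N2u(u-1)}\hat{Y}_\infty(x)x^{D_\infty}$ with $u=\log x/\log q$. The scalar prefactor $f(x):=q^{\frac N2u(u-1)}$ satisfies $f(qx)=x^Nf(x)$ (replace $u$ by $u+1$), which absorbs the leading term $A_Nx^N$, so the equation becomes $\hat{Y}_\infty(qx)=x^{-N}A(x)\,\hat{Y}_\infty(x)\,A_N^{-1}$ with $x^{-N}A(x)=A_N+A_{N-1}x^{-1}+\cdots$ holomorphic at $\infty$ and equal to $A_N$ there. Expanding $\hat{Y}_\infty(x)=\sum_{k\ge0}Z_kx^{-k}$ with $Z_0=I_m$, the coefficients satisfy $q^{-k}Z_k-A_NZ_kA_N^{-1}=\sum_{j\ge1}A_{N-j}Z_{k-j}A_N^{-1}$; the operator $Z\mapsto q^{-k}Z-A_NZA_N^{-1}$ has eigenvalues $q^{-k}-\kappa_a/\kappa_b$ (with $A_N=\mathrm{diag}(\kappa_1,\dots,\kappa_m)$), nonzero for $k\ge1$ by $0<|q|<1$ (case $a=b$) and the non-resonance hypothesis on the $\kappa_i$ (case $a\ne b$), and since $|q^{-k}|\to\infty$ its inverses are uniformly bounded (in fact tend to $0$). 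The same majorant estimate then gives convergence and holomorphy of $\hat{Y}_\infty$ near $\infty$ with $\hat{Y}_\infty(\infty)=I_m$, invertibility there, and $Y_\infty$ solves \eqref{eq:LqDE} by construction.

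The main difficulty is only bookkeeping: combining the uniform bound on $\|\mathcal{L}_k^{-1}\|$ with the $N$-term recursion to get geometric growth of the coefficients, hence a positive radius of convergence. The one step that requires a moment's thought is spotting the correct scalar multiplier $q^{\frac N2u(u-1)}$ at $\infty$ and verifying $f(qx)=x^Nf(x)$; everything else parallels the construction of local solutions of a Fuchsian differential system at a regular singular point.
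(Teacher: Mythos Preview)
Your argument is correct and is essentially the classical Carmichael construction; the recursion, the non-resonance check on the eigenvalues of $\mathcal{L}_k$, the verification $f(qx)=x^Nf(x)$, and the majorant estimate are all carried out properly. Note, however, that the paper does not supply its own proof of this proposition: it simply quotes the result from \cite{Car}. What you have written is precisely the standard proof one finds behind that citation, so there is nothing to compare---you have filled in the reference rather than offered an alternative route.
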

By using (\ref{eq:LqDE}), we have for any $k \in \mathbb{Z}_{\ge 1}$
\begin{align}
\hat{Y}_0(x)&=G_0^{-1}A(x)^{-1}A(qx)^{-1}\cdots A(q^{k-1}x)^{-1}G_0\hat{Y}_0(q^k x)G_0^{-1}A_0^kG_0, \\
\hat{Y}_\infty(x)^{-1}&=q^{-\frac{k(k+1)N}{2}}x^{kN}A_N^k\hat{Y}_\infty(q^{-k}x)^{-1}A(q^{-k}x)^{-1}\cdots A(q^{-2}x)^{-1}A(q^{-1}x)^{-1}.
\end{align}
These expressions show that ${\hat{Y}_0(x)}^{-1}$ and $\hat{Y}_\infty(x)$ can be analytically continued
to $\mathbb{P}^1 \setminus \{ 0, \infty \}$,
while $\hat{Y}_0(x)$ and ${\hat{Y}_\infty(x)}^{-1}$ can be \textit{meromorphically} continued to
the same domain.
Let $\alpha_j \ (j=1, \ldots, mN)$ be the zeros of $\det A(x)$.
Then the matrix $\hat{Y}_0(x)$ may have poles at
\begin{equation}
\{ q^{-k} \alpha_j \,|\, k \in \mathbb{Z}_{\ge 0}, \ j=1, \ldots, mN \},
\end{equation}
and the matrix ${\hat{Y}_\infty(x)}^{-1}$ may have poles at
\begin{equation}
\{ q^k \alpha_j \,|\, k \in \mathbb{Z}_{\ge 1}, \ j=1, \ldots, mN \}.
\end{equation}

The connection matrix $P(x)$ is defined on $\mathbb{P}^1 \setminus \{ 0, \infty \}$ as follows:
\begin{equation}
Y_\infty(x)=Y_0(x)P(x).
\end{equation}
Obviously, $P(x)$ is pseudo-constant, that is, $P(qx)=P(x)$.

Next we consider a deformation of the system.
We can choose some of eigenvalues $\theta_i$'s, $\kappa_j$'s, and zeros $\alpha_j$'s of $\det A(x)$ as deformation parameters.
For simplicity, we assume that the number of deformation parameters is one.
Let us denote such a parameter by $t$.
Then, in general, the connection matrix $P(x)$ depends on $t$; $P=P(x, t)$.

The \textit{connection-preserving deformation} requires that the connection matrix $P(x,t)$ be pseudo-constant with respect to
the parameter $t$.
The following holds:
\begin{prop}\label{thm:pseudo-const}
The connection matrix $P(x,t)$ is pseudo-constant with respect to $t$, namely $P(x,qt)=P(x,t)$, if and only if
\begin{align}
Y_\infty(x,qt)Y_\infty(x,t)^{-1}=Y_0(x,qt)Y_0(x,t)^{-1}.
\end{align}
\end{prop}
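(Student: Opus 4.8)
The plan is to read the equivalence off directly from the definition of the connection matrix; the argument is purely algebraic once one uses that the normalized local solutions of Proposition~\ref{thm:local_sol} are uniquely determined (so $P(x,t)$ is well defined) and that $Y_0(x,t)$ and $Y_\infty(x,t)$ are invertible on $\mathbb{P}^1\setminus\{0,\infty\}$ by the continuation described just after that proposition. First I would solve $Y_\infty(x,t)=Y_0(x,t)P(x,t)$ for the connection matrix, obtaining $P(x,t)=Y_0(x,t)^{-1}Y_\infty(x,t)$, and write the same relation with $t$ replaced by $qt$, namely $P(x,qt)=Y_0(x,qt)^{-1}Y_\infty(x,qt)$.

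Next I would transform the condition $P(x,qt)=P(x,t)$ by collecting the $Y_0$-factors on one side. Multiplying the identity $Y_0(x,qt)^{-1}Y_\infty(x,qt)=Y_0(x,t)^{-1}Y_\infty(x,t)$ on the left by $Y_0(x,t)$ and on the right by $Y_\infty(x,qt)^{-1}$ turns it into $Y_0(x,t)Y_0(x,qt)^{-1}=Y_\infty(x,t)Y_\infty(x,qt)^{-1}$, and taking inverses of both sides gives exactly $Y_\infty(x,qt)Y_\infty(x,t)^{-1}=Y_0(x,qt)Y_0(x,t)^{-1}$, the asserted identity. Since every matrix occurring here is invertible, each of these steps is reversible, so running the chain of equalities backwards establishes the converse; thus the two conditions are equivalent.

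The only point that deserves a comment, rather than being a genuine obstacle, is why the products $Y_0(x,qt)Y_0(x,t)^{-1}$ and $Y_\infty(x,qt)Y_\infty(x,t)^{-1}$ are honest single-valued meromorphic matrix functions of $x$ on $\mathbb{P}^1\setminus\{0,\infty\}$: in forming these ratios the transcendental, multivalued prefactors $x^{D_0}$ of $Y_0$ and $q^{\frac{N}{2}u(u-1)}x^{D_\infty}$ of $Y_\infty$ cancel (the exponent matrices $D_0,D_\infty$ not being among the deformed parameters), leaving products of the meromorphically continued $\hat{Y}_0,\hat{Y}_\infty$ from Proposition~\ref{thm:local_sol}. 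One should also note that the identity so obtained is between matrices that are pseudo-constant in $x$ on both sides, consistent with $P(qx,t)=P(x,t)$. The deeper fact that this common value is in fact rational in $x$ --- which is what produces a genuine Lax pair --- is not needed for the present proposition and is taken up in the following sections.
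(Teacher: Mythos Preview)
Your proof is correct and takes the same purely algebraic approach as the paper, which compresses everything into the single line $Y_\infty(x,qt)Y_\infty(x,t)^{-1}=Y_0(x,qt)P(x,qt)P(x,t)^{-1}Y_0(x,t)^{-1}$. Your side remark that $D_0,D_\infty$ are not among the deformed parameters is not quite right in the paper's main application (there $\sigma_t(D_0)=D_0+I_{2m}$, and the multivaluedness cancels because the shift is an integer multiple of the identity), though as you say this point is not needed for the proposition itself.
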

\begin{proof}
This is immediate from
\begin{equation}
Y_\infty(x,qt)Y_\infty(x,t)^{-1}=Y_0(x,qt)P(x,qt)P(x,t)^{-1}Y_0(x,t)^{-1}.
\end{equation}
\end{proof}
We put $B(x,t):=Y_\infty(x,qt)Y_\infty(x,t)^{-1}=Y_0(x,qt)Y_0(x,t)^{-1}$.
Then the solutions $Y(x,t)=Y_0(x,t), \,Y_\infty(x,t)$ satisfy
\begin{equation}\label{eq:deform}
Y(x, qt)=B(x, t)Y(x, t).
\end{equation}
Under some general conditions, the matrix $B(x,t)$ is shown to be rational in $x$.
Conversely, if the solutions $Y(x,t)=Y_0(x,t), \,Y_\infty(x,t)$ satisfy \eqref{eq:deform}
for some matrix $B(x, t)$ which is rational in $x$,
then the corresponding connection matrix is pseudo-constant in $t$.

Thus (under some conditions) the connection-preserving deformation is equivalent to the existence of $B(x,t)$ which is rational in $x$ such that the
following pair
\begin{equation}\label{eq:qLax}
\left\{
\begin{aligned}
Y(qx,t)=A(x,t)Y(x,t) \\
Y(x,qt)=B(x,t)Y(x,t)
\end{aligned}
\right.
\end{equation}
is compatible.
Then, analogous to the isomonodromic deformation of linear differential equations, the compatibility condition
\begin{equation}
A(x, qt)B(x, t)=B(qx, t)A(x, t)
\end{equation}
of \eqref{eq:qLax} reduces to a system of non-linear $q$-difference equations satisfied by entries of $A(x,t)$ and $B(x,t)$.
Actually, the entries of $B(x,t)$ can be expressed by entries of $A(x,t)$.
In Section~\ref{sec:derivation}, we see how to determine the matrix $B(x,t)$ through an example.

\subsection{Spectral types of linear $q$-difference systems}\label{sec:qST}

Here we recall the notion of spectral type of linear $q$-difference systems
introduced in \cite{SY}.
For simplicity, we explain it in the case when $A_0$ and $A_N$ are semisimple.

Consider the following linear $q$-difference system:
\begin{equation}
Y(qx)=A(x)Y(x),
\end{equation}
where $A(x)=A_N x^N+\cdots+A_1x+A_0$ is an $m \times m$ matrix whose entries are polynomials in $x$.
We assume that, for any $a \in \mathbb{C},\, A(a) \ne O$.
Let the eigenvalues of $A_0$ be $\theta_j \ (j=1, \ldots, k)$, and let their multiplicities be $m_j \ (j=1, \ldots, k)$.
Similarly, let the eigenvalues of $A_N$ be $\kappa_j \ (j=1, \ldots, \ell)$, and let their multiplicities be $n_j \ (j=1, \ldots, \ell)$:
\begin{equation}
A_0 \sim
\theta_1 I_{m_1} \oplus \cdots \oplus \theta_k I_{m_k}, \quad
A_N \sim
\kappa_1 I_{n_1} \oplus \cdots \oplus \kappa_\ell I_{n_\ell}.
\end{equation}
Then we define partitions $S_0$ and $S_\infty$ of $m$ as
\begin{equation}
S_0=m_1, \ldots, m_k, \quad
S_\infty=n_1, \ldots, n_\ell.
\end{equation}

Let $Z_A$ be the set of the zeros of $\det A(x)$:
\begin{equation}
Z_A=\{ a \in \mathbb{C} \,|\, \det A(a)=0 \}=
\{ \alpha_1, \ldots, \alpha_p \}.
\end{equation}
We denote by $d_i \, (i=1, \ldots, m)$ the elementary divisors of $A(x)$.
Here we assume that $d_{i+1} | d_i$ (which is opposite to the usual convention).
For any $\alpha_i \in Z_A$, we denote by $\tilde{n}^i_k$ the order of $\alpha_i$ in $d_k$.
Let $\{ n^i_j \}_j$ be the partition conjugate to $\{ \tilde{n}^i_k \}_k$.
Then we define $S_{\mathrm{div}}$ by
\begin{equation}
S_{\mathrm{div}}=n^1_1 \ldots n^1_{k_1}, \ldots, n^p_1 \ldots n^p_{k_p}.
\end{equation}

So far, we have seen the spectral type of both linear differential equations and linear $q$-difference equations.
Concerning the relationship between the two kinds of spectral types, we propose the following conjecture.
\begin{conj}\label{thm:correspondence_of_ST}
Consider an $m \times m$ system of Fuchsian differential equations:
\begin{equation}\label{eq:differential}
\frac{dY}{dx}=\sum_{j=0}^n\frac{\mathcal{A}_j}{x-u_j}Y,  \quad u_0:=0.
\end{equation}
Let $\mathcal{P}_j=m^j_1, \ldots, m^j_{\ell_j}$ ($j=0,\ldots,n,\infty$) be the partition of $m$ corresponding to $\mathcal{A}_j$.
Thus its spectral type reads $\mathcal{P}_0 \, ; \, \ldots \, ; \, \mathcal{P}_n \, ; \, \mathcal{P}_\infty$.

For the equation \eqref{eq:differential}, consider an $m \times m$ linear $q$-difference system
\begin{equation}\label{eq:q-analogue}
Y(qx)=A(x)Y(x), \quad
A(x)=A_nx^n+\cdots+A_0
\end{equation}
with the following spectral type:
\[
S_0=\mathcal{P}_0, \quad
S_\infty=\mathcal{P}_\infty, \quad
S_\mathrm{div}=m^1_1, \ldots, m^1_{\ell_1}, \ldots, m^n_1, \ldots, m^n_{\ell_n}.
\]
Changing the dependent variable as $Z(x)=f(x)Y(x)$ with a suitable scalar function $f(x)$, we have
\begin{equation}\label{eq:q-difference}
\frac{Z(x)-Z(qx)}{(1-q)x}=\frac{1}{(1-q)x}\left\{ I_m-\frac{f(qx)}{f(x)}A(x) \right\}Z(x).
\end{equation}
Then, by taking a continuous limit $q \to 1$ such that
\begin{equation}
\alpha^k_j \ (j=1, \ldots, \ell_k) \to u_k \quad (q \to 1)
\end{equation}
where $\alpha^k_j$ is the zero of $\det A(x)$ corresponding to $m^k_j$,
we can let the equation \eqref{eq:q-difference} tend to \eqref{eq:differential}.
\end{conj}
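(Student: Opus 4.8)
\textit{Sketch of a proof.} The plan is to perform the limit $q\to1$ directly on \eqref{eq:q-difference}, analysing the two sides separately. For the left-hand side a Taylor expansion gives $Z(qx)=Z(x)+(q-1)x\,Z'(x)+O((q-1)^2)$, so that
\[
\frac{Z(x)-Z(qx)}{(1-q)x}=\frac{dZ}{dx}+O(q-1)
\]
uniformly on compact subsets of any domain on which $Z$ is holomorphic; hence the left side tends to $dZ/dx$. It therefore suffices to show that
\[
M(x;q):=\frac{1}{(1-q)x}\left\{I_m-\frac{f(qx)}{f(x)}A(x)\right\}
\]
converges, as $q\to1$ with $\alpha^k_j\to u_k$, to $\sum_{j=0}^n \mathcal{A}_j/(x-u_j)$ for suitable residues $\mathcal{A}_j$ of the prescribed spectral type. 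The scalar prefactor $f(x)=f(x;q)$ is to be chosen as a product of $q$-Gamma--type factors, one for each zero $\alpha^k_j$ of $\det A(x)$ raised to the power $m^k_j$, together with a monomial factor $x^{c}$; using the identity $(ax;q)_\infty=(1-ax)(qax;q)_\infty$ this makes $f(qx)/f(x)$ a rational function of $x$ whose zero--pole degrees can be tuned so that $\widetilde{A}(x):=\frac{f(qx)}{f(x)}A(x)$ is a rational matrix function with $\widetilde{A}(x)\to I_m$ pointwise on $\mathbb{C}^{*}\setminus\{u_1,\dots,u_n\}$ as $q\to1$ (here one uses that, with the $q$-exponents $D_0,D_\infty$ of Proposition~\ref{thm:local_sol} held fixed, the eigenvalues $\theta_i,\kappa_i$ of $A_0,A_N$ tend to $1$).

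The heart of the argument is the local analysis of $M(x;q)$ near each $u_k$. Since the $\ell_k$ zeros $\alpha^k_1,\dots,\alpha^k_{\ell_k}$ of $\det A(x)$ --- carrying, in the elementary-divisor sense, orders that build the partition $m^k_1,\dots,m^k_{\ell_k}$ --- all collide at $u_k$, the limit of $\widetilde{A}(x)$ fails to equal $I_m$ only to the order detected by $(x-u_k)$, and $M(x;q)$ acquires a simple pole there. I would establish, via the Smith normal form of $A(x)$ over the local ring at $u_k$ together with a matrix-perturbation argument controlling the rate $\alpha^k_j\to u_k$, that
\[
M(x;q)\longrightarrow \frac{\mathcal{A}_k}{x-u_k}+(\text{holomorphic at }u_k),
\]
with $\mathcal{A}_k$ semisimple and with eigenvalue multiplicities exactly $m^k_1,\dots,m^k_{\ell_k}$ --- the passage to the conjugate partition in the definition of $S_{\mathrm{div}}$ is precisely the bookkeeping that converts elementary-divisor data into the Jordan type of a residue. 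Away from the $u_k$ the limit of $M(x;q)$ is holomorphic; at $x=0$ its residue $\mathcal{A}_0$ is governed by $A_0=G_0\,\mathrm{diag}(\theta_i)\,G_0^{-1}$ and the monomial factor of $f$, so that $\mathcal{A}_0$ has the eigenvalue multiplicities of $A_0$, i.e.\ $S_0=\mathcal{P}_0$; at $x=\infty$ the behaviour of $\widetilde{A}(x)$ is controlled by $A_N$ and the degree of $f$, giving $\mathcal{A}_\infty=-\sum_{j=0}^n\mathcal{A}_j$ with $S_\infty=\mathcal{P}_\infty$. Collecting these local contributions and applying a Liouville-type argument (a rational matrix function with only simple poles equals the sum of its principal parts) yields $M(x;q)\to\sum_{j=0}^n\mathcal{A}_j/(x-u_j)$, so \eqref{eq:q-difference} tends to \eqref{eq:differential} with the spectral type as claimed.

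The step I expect to be the main obstacle is this local matrix analysis at $u_k$: one must show that merging $\ell_k$ distinct zeros of $\det A(x)$ with a prescribed elementary-divisor structure produces a \emph{simple} pole (no higher-order part survives the rescaling by $1/(1-q)$) whose residue has exactly the Jordan type read off from the conjugate partition, and one must check that a compatible rate of collision $\alpha^k_j\to u_k$ can be arranged simultaneously for all $k$. For general partitions this requires a delicate perturbation-theoretic argument, which is why the statement is posed as a conjecture; in the present paper it is verified directly in the case of the matrix $P_{\mathrm{VI}}$, where every finite partition equals $m,m$ and the local structure is uniform.
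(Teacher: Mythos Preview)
The statement you are attempting to prove is labeled in the paper as a \emph{Conjecture}, and the paper does \emph{not} provide a proof of it in any generality. There is therefore no ``paper's own proof'' against which to compare your attempt. What the paper does do is verify the conjecture in the single case relevant to the matrix $P_{\mathrm{VI}}$: in Section~\ref{sec:CL} the author takes the specific $q$-difference system of spectral type $(m,m;\,m,m{-}1,1;\,m,m,m,m)$ that was parametrized explicitly in Section~\ref{sec:derivation}, applies an explicit scalar gauge $f(x)=(x;q)_\infty(x/t;q)_\infty\,\vartheta_q(x)/\vartheta_q(x/t)$, and computes the limit $q\to1$ directly to recover the Fuchsian system~\eqref{eq:Fuchs_mp}. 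That verification is a computation, not a structural argument: it relies on the explicit formulas for $A(x,t)$ in terms of $F,G,W$ and on the very simple divisor pattern $m,m,m,m$, where each finite singular point absorbs exactly two zeros of equal multiplicity $m$. You correctly note this in your final paragraph.

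Your sketch is therefore not a comparison candidate but an attempt to go well beyond the paper. As a heuristic outline it is reasonable---the choice of $f$ as a product of $q$-shifted factorials, the separation into local analyses at each $u_k$, and the observation that the conjugate-partition step in $S_{\mathrm{div}}$ should translate elementary-divisor data into residue Jordan type are all sensible---but it is not a proof, and you are candid about this. The step you flag as the main obstacle is genuinely the whole difficulty: showing that merging zeros with a prescribed Smith-form structure yields a \emph{simple} pole with the correct semisimple residue requires controlling not just $\det A(x)$ but the full matrix $A(x)$ as $q\to1$, and the conjecture gives no mechanism for how the off-diagonal entries of $A(x)$ (equivalently, the holomorphic frames in the Smith factorization) should behave in the limit. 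Absent such control there is no reason the residue should be semisimple or have the predicted multiplicities. So your write-up is best read as a plausibility argument for the conjecture rather than a proof; the paper itself claims no more.
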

If the conjecture is true,
it is expected that the connection-preserving deformation equation of \eqref{eq:q-analogue} gives a $q$-analogue of the isomonodromic deformation equation of \eqref{eq:differential}.

\section{Derivation of a $q$-analogue of the matrix $P_\mathrm{VI}$}\label{sec:derivation}
In this section we derive a $q$-analogue of the matrix $P_{\mathrm{VI}}$.
We consider a linear $q$-difference system of spectral type $(m,m; m,m-1,1; m,m,m,m)$,
which by Conjecture~\ref{thm:correspondence_of_ST}
is expected to give a $q$-analogue of the linear system~\eqref{eq:Fuchs_mp} associated with the matrix $P_{\mathrm{VI}}$.

\subsection{Parametrization of linear systems}
First we parametrize linear $q$-difference systems of spectral type $(m,m; m,m-1,1; m,m,m,m)$.
Consider a linear $q$-difference system of the following form:
\begin{align}\label{eq:linear_qmatPVI}
Y(qx)&=A(x)Y(x), \quad
A(x)=A_2x^2+A_1x+A_0, \quad
A_j \in M_{2m}(\mathbb{C}),
\end{align}
where
\begin{align}
A_2=
\begin{pmatrix}
\kappa_1 I_m & O \\
O & K
\end{pmatrix}, \quad
K=
\mathrm{diag}(\overbrace{\kappa_2, \ldots, \kappa_2}^{m-1}, \kappa_3), \quad
A_0=G_0
\begin{pmatrix}
\theta_1 t I_m & O \\
O & \theta_2 t I_m
\end{pmatrix}
G_0^{-1}.
\end{align}
We partition the matrix $A_1$ as follows:
\begin{equation}
A_1=
\begin{pmatrix}
A_{11} & A_{12} \\
A_{21} & A_{22}
\end{pmatrix},
\quad A_{ij} \in M_m(\mathbb{C}).
\end{equation}
Since $S_{\mathrm{div}}=m,m,m,m$, 
the Smith normal form of the polynomial matrix $A(x)$ is the following
(note that we follow the usual convention here):
\begin{equation}
\begin{pmatrix}
I_m & O \\
O & \prod_{i=1}^4 (x-\alpha_i)I_m
\end{pmatrix}.
\end{equation}
We assume that $\alpha_j$'s depend on $t$ as follows:
\begin{equation}
\alpha_j=\left\{
\begin{aligned}
a_j t \quad (j=1,2), \\
a_j \quad (j=3,4).
\end{aligned}
\right.
\end{equation}
We also assume $q\alpha_i \ne \alpha_j \ (i \ne j)$.
The parameter $t$ will play the role of a deformation parameter later,
then the parameters $\theta_j$, $\kappa_j$, and $a_j$'s are independent of $t$.
Since
\begin{equation}
\det A(x)=\kappa_1^m \kappa_2^{m-1}\kappa_3 \prod_{i=1}^4(x-\alpha_i)^m,
\end{equation}
we have
\begin{equation}\label{eq:qFuchsrel}
\kappa_1^m \kappa_2^{m-1}\kappa_3 \prod_{i=1}^4 a_i^m=\theta_1^m\theta_2^m.
\end{equation}

By the assumption above, the matrix $A_0$ can be written as follows:
\begin{equation}
A_0=
\theta_2 t I_{2m}+
\begin{pmatrix}
I_m \\
B_1
\end{pmatrix}
\begin{pmatrix}
(\theta_1-\theta_2)t I_m-C_1B_1 & C_1
\end{pmatrix}.
\end{equation}
Thus the matrix $A(x)$ reads
\begin{equation}
A(x)=
\begin{pmatrix}
\kappa_1 I_m x^2+A_{11}x+(\theta_1 t I_m-C_1B_1) & A_{12}x+C_1 \\
A_{21}x+B_1\{ (\theta_1-\theta_2)t I_m-C_1B_1 \} & Kx^2+A_{22}x+(\theta_2 t I_m+B_1C_1)
\end{pmatrix}.
\end{equation}
In the same way as in~\cite{JS}, we introduce new variables $F, G_1, G_2$, and $W$ by
\begin{equation}
A(x)=
\begin{pmatrix}
WK\{ \kappa_1(xI_m-F)(xI_m-\bm{\alpha})+\kappa_1G_1 \}K^{-1}W^{-1} & WK(xI_m-F) \\
A_{21}x+B_1\{ (\theta_1-\theta_2)t I_m-C_1B_1 \} & K(xI_m-\bm{\beta})(xI_m-F)+KG_2
\end{pmatrix}.
\end{equation}
Below we show that the matrix $A(x)$ can be expressed using $F, G_1, G_2$, and $W$
(indeed, $G_1$ and $G_2$ are written by $F$ and another matrix $G$).
First, we immediately have
\begin{align}
&C_1=-WKF, \quad
A_{12}=WK, \\
&A_{11}=-\kappa_1WK(F+\bm{\alpha})K^{-1}W^{-1}, \quad
A_{22}=-K(F+\bm{\beta}), \\
&\theta_1 t I_m-C_1B_1=\kappa_1WK(F\bm{\alpha}+G_1)K^{-1}W^{-1}, \quad
\theta_2 t I_m+B_1C_1=K(\bm{\beta}F+G_2). \label{eq:alpha_beta}
\end{align}

Let $M_1$ and $M_2$ be the following polynomial matrices:
\begin{align}
M_1&=
\begin{pmatrix}
I_m & O \\
O & G_1K^{-1}
\end{pmatrix}
\begin{pmatrix}
I_m & O \\
Z & I_m
\end{pmatrix}
\begin{pmatrix}
\kappa_1^{-1}WKG_1^{-1}K^{-1}W^{-1} & O \\
O & I_m
\end{pmatrix}, \\
M_2&=
\begin{pmatrix}
I_m & O \\
-\kappa_1(xI_m-\bm{\alpha})K^{-1}W^{-1} & I_m
\end{pmatrix}
\begin{pmatrix}
I_m & -\kappa_1^{-1}WKG_1^{-1}(xI_m-F) \\
O & I_m
\end{pmatrix},
\end{align}
where $Z$ is the $(2,1)$-block of
\begin{equation}
-A(x)
\begin{pmatrix}
I_m & O \\
-\kappa_1(xI_m-\bm{\alpha})K^{-1}W^{-1} & I_m
\end{pmatrix}.
\end{equation}
Then by direct calculation we have
\begin{equation}\label{eq:SmithA}
M_1A(x)M_2=
\begin{pmatrix}
I_m & O \\
O & \tilde{A}(x)
\end{pmatrix}
\end{equation}
with
\begin{equation}
\tilde{A}(x)=(x^3+Q_1x^2+Q_2x+Q_3)(xI_m-F)+G_1G_2.
\end{equation}
Here $Q_1, Q_2$, and $Q_3$ are written as follows:
\begin{align}
Q_1&=-G_1(\bm{\alpha}+\bm{\beta}+F)G_1^{-1}, \\
Q_2&=G_1(G_1+G_2-\kappa_1^{-1}K^{-1}A_{21}A_{12}+F\bm{\alpha}+\bm{\beta}\bm{\alpha}+\bm{\beta}F)G_1^{-1}, \\
Q_3&=-G_1(\bm{\beta}G_1+G_2\bm{\alpha}+\kappa_1^{-1}K^{-1}(A_0)_{21}A_{12}+\bm{\beta}F\bm{\alpha})G_1^{-1},
\end{align}
where $(A_0)_{21}$ is the $(2, 1)$-block of $A_0$.
By the assumption, the Smith normal form of $\tilde{A}(x)$ is $\prod_{j=1}^4(x-\alpha_j)I_m$.
Thus we have
\begin{equation}\label{eq:Smith_relation}
\tilde{A}(x)=\prod_{j=1}^4(x-\alpha_j)I_m
\end{equation}
since $\deg \tilde{A}(x)=4$.
Equating the coefficient matrices of $x^j$ of both sides of \eqref{eq:Smith_relation}, we have
\begin{align}
Q_1&=F+\beta_1, \label{eq:Q1}\\
Q_2&=(F+\beta_1)F+\beta_2, \label{eq:Q2}\\
Q_3&=(F^2+\beta_1 F+\beta_2)F+\beta_3, \label{eq:Q3}\\
G_1G_2&=(F-\alpha_1I)(F-\alpha_2I)(F-\alpha_3I)(F-\alpha_4I) \label{eq:G1G2rel}.
\end{align}
Here the auxiliary parameters $\beta_j$'s are defined by
\begin{equation}
\sum_{j=0}^4 \beta_{4-j}x^j:=\prod_{j=1}^4(x-\alpha_j).
\end{equation}
In particular, because of \eqref{eq:G1G2rel}, we can introduce a new variable $G$ by
\begin{align}
G_1&=q^{-1}\kappa_1^{-1}(F-\alpha_1)(F-\alpha_2)G^{-1}, \\
G_2&=q\kappa_1G(F-\alpha_3)(F-\alpha_4).
\end{align}
By means of the equations \eqref{eq:Q1}, \eqref{eq:Q2}, \eqref{eq:Q3}, and \eqref{eq:alpha_beta}, we have
\begin{align}
B_1&=(\kappa_1^{-1}-K^{-1})^{-1}\{ F^{-1}G_1+G_2F^{-1}-t(\theta_1{\kappa_1}^{-1}+\theta_2K^{-1})F^{-1}-F-G_1^{-1}FG_1-\beta_1 \}K^{-1}W^{-1}, \\
\bm{\alpha}&=(\kappa_1-K)^{-1}\left\{ (\theta_1+\theta_2)t F^{-1}-\kappa_1F^{-1}G_1-KG_2F^{-1}+K(F+G_1^{-1}FG_1+\beta_1) \right\}, \\
\bm{\beta}&=(\kappa_1-K)^{-1}\left\{ -(\theta_1+\theta_2)t F^{-1}+\kappa_1F^{-1}G_1+KG_2F^{-1}-\kappa_1(F+G_1^{-1}FG_1+\beta_1) \right\}, \\
A_{21}&=\kappa_1K\{ G_1+G_2+F\bm{\alpha}+\bm{\beta}F+\bm{\beta}\bm{\alpha}-G_1^{-1}(F^2+\beta_1F+\beta_2)G_1 \}K^{-1}W^{-1},
\end{align}
and
\begin{align}\label{eq:commutation_relation}
F^{-1}GFG^{-1}&=\frac{a_1a_2a_3a_4\kappa_1}{\theta_1\theta_2}K. 
\end{align}
The equation \eqref{eq:commutation_relation} is a commutation relation between $F$ and $G$.

From the above we obtain a parametrization of linear $q$-difference systems with spectral type $(m,m; m,m-1,1; m,m,m,m)$:
\begin{align}
A(x)=A(x,t)&=
\begin{pmatrix}
WK\{ \kappa_1(xI_m-F)(xI_m-\bm{\alpha})+\kappa_1G_1 \}K^{-1}W^{-1} & WK(xI_m-F) \\
\kappa_1(\bm{\gamma}x+\bm{\delta})W^{-1} & K(xI_m-\bm{\beta})(xI_m-F)+KG_2
\end{pmatrix}
\end{align}
with \eqref{eq:commutation_relation}.
Here $\bm{\gamma}$ and $\bm{\delta}$ are given by
\begin{align}
\bm{\gamma}&=K\{ G_1+G_2+F\bm{\alpha}+\bm{\beta}F+\bm{\beta}\bm{\alpha}-G_1^{-1}(F^2+\beta_1F+\beta_2)G_1 \}K^{-1}, \\
\bm{\delta}&=\kappa_1^{-1}\{ t^2\theta_1\theta_2 F^{-1}-\kappa_1 K(G_2+\bm{\beta}F)F^{-1}(G_1+F\bm{\alpha}) \}K^{-1}.
\end{align}

\subsection{The number of accessory parameters}
In the previous subsection,
we have seen that the linear $q$-difference systems with spectral type $(m,m; m,m-1,1; m,m,m,m)$ are expressed by $F$, $G$, and the gauge variable $W$.
In this subsection we see that, by the commutation relation (\ref{eq:commutation_relation}),
$F$ and $G$ can be expressed by $2m$ parameters other than the gauge freedom which comes from $\mathrm{Stab}(K)$.
In other words, the number of accessory parameters of the linear system under consideration is $2m$.

First, using the action of $\mathrm{Stab}(K)$, we can assume that $F$ or $G$ has a particular form.
Here we assume that $F$ has the following form
(in a similar manner to the proof of Proposition~4.2 in \cite{K2}):
\begin{equation}
F=
\begin{pmatrix}
0         & 1        & 0        & \ldots & \ldots & 0 \\
0         & 0        & 1        & \ddots &          & 0 \\
\vdots & \vdots & \ddots & \ddots & \ddots & \vdots \\
0         &  0       &           & \ddots & \ddots & 0 \\
f_2       & f_3      & \ldots &           & f_m      & 1\\
f_1       & f_{m2}  & f_{m3}  & \ldots & f_{m,m-1}& f_{mm}
\end{pmatrix}.
\end{equation}
Note that the principal $(m-1) \times (m-1)$ submatrix of $F$ obtained by deleting the last row and column is in the form of a companion matrix.
Rewrite the commutation relation \eqref{eq:commutation_relation} as
\begin{equation}\label{eq:com_rel_rewrite}
GF-\rho FKG=O,
\end{equation}
where $\rho=a_1a_2a_3a_4\kappa_1(\theta_1\theta_2)^{-1}$.
By the relation \eqref{eq:qFuchsrel}, we have $\det(\rho K)=1$.
Regarding \eqref{eq:com_rel_rewrite} as a system of linear equations with respect to $G=(\bm{g}_1, \ldots, \bm{g}_m)$, we further rewrite it as
\begin{equation}\label{eq:eqFandG}
\Omega_m \bm{g}=\bm{0}, \quad
\Omega_m:= {}^t\! F \otimes I_m-I_m \otimes (\rho FK),
\end{equation}
where
\begin{equation}
\bm{g}=
\begin{pmatrix}
\bm{g}_1 \\
\vdots \\
\bm{g}_m
\end{pmatrix}.
\end{equation}

Now assume that \eqref{eq:eqFandG} has an invertible solution $G$.
Then
\begin{equation}
(G^{\oplus m})^{-1} \Omega_m G^{\oplus m}=
{}^t\!F \otimes I_m-I_m \otimes F.
\end{equation}
By the Cayley-Hamilton theorem, we find that $\mathrm{rank}({}^t\!F \otimes I_m-I_m \otimes F)=m(m-1)$.
Thus we have
\begin{equation}\label{eq:rank_omega}
\mathrm{rank} \, \Omega_m=m(m-1).
\end{equation}
On the other hand, by elementary row operations, $\Omega_m$ can be reduced to
\begin{equation}
\Omega_m \to
\begin{pmatrix}
O & O & \ldots & O & \Phi_F(\tilde{F}) \\
I_m & -\tilde{F} & \ldots & \ldots & * \\
 & I_m & \ddots & & \vdots \\
 & & \ddots & \ddots & \vdots \\
 & & & I_m & f_{mm}-\tilde{F}
\end{pmatrix}.
\end{equation}
We have written $\rho FK$ as $\tilde{F}$ for simplicity.
For an $m \times m$ matrix $M$, we denote the characteristic polynomial of $M$ by $\Phi_M(\lambda)$:
\begin{equation}
\Phi_M(\lambda)=\lambda^m-c_1(M)\lambda^{m-1}+\cdots+(-1)^{m-k}c_{m-k}(M)\lambda^k+\cdots+(-1)^m c_m(M).
\end{equation}
The condition \eqref{eq:rank_omega} implies $\Phi_F(\tilde{F})=O$.
Taking the difference of $\Phi_F(\tilde{F})=O$ and $\Phi_{\tilde{F}}(\tilde{F})=O$, we have
\begin{multline}\label{eq:apF}
-(c_1(F)-c_1(\tilde{F}))\tilde{F}^{m-1}+\cdots+(-1)^{m-k}(c_{m-k}(F)-c_{m-k}(\tilde{F}))\tilde{F}^k+\cdots \\
+(-1)^{m-1}(c_{m-1}(F)-c_{m-1}(\tilde{F}))\tilde{F}=O
\end{multline}
(we have used $c_m(F)=c_m(\tilde{F})$).
Note that
\begin{equation}
\left( \tilde{F}^k \right)_{1 \, m}=
\left\{
\begin{matrix}
0 &  (k=1, \ldots, m-2), \\
\rho^{m-1}\kappa_2^{m-2}\kappa_3 & (k=m-1).
\end{matrix}
\right.
\end{equation}
Hence the $(1,m)$ entry of \eqref{eq:apF} reads
\begin{equation}
c_1(F)-c_1(\tilde{F})=0.
\end{equation}
Multiplying \eqref{eq:apF} by $\tilde{F}$ and again from the $(1,m)$ entry we have
\begin{equation}
c_2(F)-c_2(\tilde{F})=0.
\end{equation}
By repeating this procedure, we finally have
\begin{equation}\label{eq:cond_nontrivial_sol}
c_k(F)-c_k(\tilde{F})=0 \quad (k=1, \ldots, m-1).
\end{equation}
When we regard \eqref{eq:cond_nontrivial_sol} as equations with respect to $f_{m2}, \ldots, f_{mm}$,
they are uniquely expressed by $f_1, \ldots, f_m$.
Conversely, it is obvious that \eqref{eq:cond_nontrivial_sol} implies \eqref{eq:rank_omega}.
Under the condition \eqref{eq:rank_omega},
the equation \eqref{eq:eqFandG} has $m$ independent solutions and thus its general solution contains $m$ arbitrary constants, say $g_1, \ldots, g_m$.
Therefore $F$ and $G$ are expressed by these $2m$ accessory parameters $f_i$'s and $g_i$'s $(i=1, \ldots, m)$.

\subsection{Rationality of $B(x,t)$}\label{sec:determination_of_B}

Let us consider the connection-preserving deformation of \eqref{eq:linear_qmatPVI}.
Let $\sigma_t$ be the $q$-shift operator with respect to $t$, namely, $\sigma_t : t \mapsto qt$.
For a (matrix valued) function $M=M(t)$, we often denote $\sigma_t(M)$ by $\overline{M}$.

By virtue of Proposition~\ref{thm:local_sol}, there exist the following local solutions of \eqref{eq:linear_qmatPVI}
\begin{align}
&Y_0(x,t)=G_0\hat{Y}_0(x,t)x^{D_0}, \quad
D_0=\frac{1}{\log q}
\mathrm{diag}(\overbrace{\log(\theta_1 t),\ldots,\log(\theta_1 t)}^{m}, \overbrace{\log(\theta_2 t),\ldots,\log(\theta_2 t)}^{m}), \label{eq:local_sol_0}\\
&Y_\infty(x,t)=q^{u(u-1)}\hat{Y}_\infty(x,t)x^{D_\infty}, \quad
D_\infty=\frac{1}{\log q}
\mathrm{diag}(\overbrace{\log \kappa_1,\ldots,\log \kappa_1}^{m}, \overbrace{\log \kappa_2,\ldots,\log \kappa_2}^{m-1},\log \kappa_3). \label{eq:local_sol_infty}
\end{align}
We note that
\begin{equation}
\sigma_t(D_0)=D_0+I_{2m}, \quad
\sigma_t(D_\infty)=D_\infty.
\end{equation}
The corresponding connection matrix $P(x,t)$ is defined by $P(x,t)=Y_0(x,t)^{-1}Y_\infty(x,t)$.

Now assume that $\sigma_t(P(x,t))=P(x,t)$.
Then by Proposition~\ref{thm:pseudo-const}, we have
\begin{equation}
(\sigma_t Y_\infty)Y_\infty^{-1}=(\sigma_t Y_0)Y_0^{-1}.
\end{equation}
We denote this matrix by $B(x,t)$:
\begin{equation}\label{eq:defB}
B(x,t):=(\sigma_t Y_\infty)Y_\infty^{-1}=(\sigma_t Y_0)Y_0^{-1}.
\end{equation}
\begin{prop}
The matrix $B(x,t)$ is rational in $x$.
More specifically, it has the following form:
\begin{equation}
B(x,t)=
\frac{x(x^{2m-1}+\tilde{B}_{2m-2}x^{2m-2}+\cdots+\tilde{B}_0)}{(x-qa_1t)^m(x-qa_2t)^m},
\end{equation}
where $\tilde{B}_j$'s are $2m \times 2m$ matrices.
\end{prop}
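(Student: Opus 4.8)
The plan is to read off the structure of $B$ from the two identities $B(x,t)=Y_\infty(x,qt)Y_\infty(x,t)^{-1}=Y_0(x,qt)Y_0(x,t)^{-1}$ supplied by Proposition~\ref{thm:pseudo-const}, using the local normal forms of $Y_0,Y_\infty$ from Proposition~\ref{thm:local_sol}. First I would extract the behaviour at $x=\infty$: since $\sigma_t(D_\infty)=D_\infty$ and the scalar factor $q^{u(u-1)}$ does not involve $t$, the $\infty$-expression collapses to $B(x,t)=\sigma_t(\hat Y_\infty(x,t))\,\hat Y_\infty(x,t)^{-1}$, and $\hat Y_\infty(\infty,\cdot)=I_{2m}$ shows $B$ is holomorphic at $x=\infty$ with $B(\infty,t)=I_{2m}$. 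Dually, from $\sigma_t(D_0)=D_0+I_{2m}$ one has $x^{\sigma_t D_0}(x^{D_0})^{-1}=xI_{2m}$, hence $B(x,t)=x\,G_0\,\sigma_t(\hat Y_0(x,t))\,\hat Y_0(x,t)^{-1}G_0^{-1}$, and $\hat Y_0(0,\cdot)=I_{2m}$ shows $B$ is holomorphic at $x=0$ with $B(0,t)=O$. These two facts will ultimately force the numerator to be monic of degree $2m$ and to vanish at $x=0$.

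Next I would localize the poles. By the continuation properties recalled after Proposition~\ref{thm:local_sol}, $\hat Y_0^{-1}$ and $\hat Y_\infty$ are holomorphic on $\mathbb{P}^1\setminus\{0,\infty\}$, while $\hat Y_0$ and $\hat Y_\infty^{-1}$ are meromorphic there with poles only at $\{q^{-k}\alpha_i(t):k\ge0\}$ and $\{q^{k}\alpha_i(t):k\ge1\}$ respectively. Thus the $\infty$-expression shows $B$ is holomorphic off $\{q^{k}\alpha_i(t):k\ge1\}$ and the $0$-expression shows it is holomorphic off $\{\infty\}\cup\{q^{-k}\alpha_i(qt):k\ge0\}$, so the poles of $B$ sit in the intersection of these two exceptional sets. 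Substituting $\alpha_i(qt)=qa_it$ for $i=1,2$ and $\alpha_i(qt)=a_i$ for $i=3,4$, and using $q\alpha_i\ne\alpha_j$ together with genericity of the $a_i$ (so that the various $q^{\mathbb{Z}}$-orbits meet only where unavoidable), this intersection is exactly $\{qa_1t,\,qa_2t\}$.

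It then remains to bound the order of the pole at each $qa_jt$ by $m$. Taking $k=1$ in the functional relation $\hat Y_0(x,s)=G_0^{-1}A(x,s)^{-1}G_0\,\hat Y_0(qx,s)\,G_0^{-1}A_0 G_0$ with $s=qt$, one sees near $x=qa_jt=\alpha_j(qt)$ that $\hat Y_0(qx,qt)$ is holomorphic there (its poles, at $q^{-k-1}\alpha_i(qt)$, avoid $\alpha_j(qt)$ because $q\alpha_i\ne\alpha_j$), so the entire singularity of $B$ at $qa_jt$ is produced by $A(x,qt)^{-1}$. Because the Smith normal form of $A(x,t)$ is $\mathrm{diag}(I_m,\prod_{i=1}^4(x-\alpha_i)I_m)$, the inverse $A(x,qt)^{-1}$ has at most a simple pole at $x=\alpha_j(qt)$; in particular $B$ has a pole of order $\le m$ there. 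Therefore $N(x,t):=(x-qa_1t)^m(x-qa_2t)^m B(x,t)$ is holomorphic on $\mathbb{P}^1\setminus\{\infty\}$, i.e. an $M_{2m}(\mathbb{C})$-valued polynomial in $x$; combining with $B(\infty,t)=I_{2m}$ (so $N$ has degree $2m$ with leading term $I_{2m}x^{2m}$) and $B(0,t)=O$ (so $N(0,t)=O$) gives $N(x,t)=x\,(x^{2m-1}+\tilde B_{2m-2}x^{2m-2}+\cdots+\tilde B_0)$, as asserted.

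The genuine content, as opposed to bookkeeping, is the pole-order bound: one must check that no accumulation of the infinitely many shifted poles of $\hat Y_0$ lands on $qa_jt$ and that the Smith-form data really caps $A(x,qt)^{-1}$ at a simple pole. A more robust substitute, if needed, is a determinantal computation: from $\det Y_0(qx,t)=\det A(x,t)\,\det Y_0(x,t)$ and the $q$-Fuchs relation $\kappa_1^m\kappa_2^{m-1}\kappa_3\prod_{i=1}^4 a_i^m=\theta_1^m\theta_2^m$ one obtains $\det\hat Y_0(qx,t)/\det\hat Y_0(x,t)=\prod_{i=1}^4(x/\alpha_i-1)^m$, whence $\det B(x,t)=x^{2m}\det\hat Y_0(x,qt)/\det\hat Y_0(x,t)=c\,x^{2m}/\bigl((x-qa_1t)^m(x-qa_2t)^m\bigr)$ for a constant $c$, independently confirming the pole orders. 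The remaining point to watch is that the $q$-orbits in the second paragraph do not collide accidentally, which is precisely what the standing non-resonance and genericity hypotheses on $q$, the $\theta$'s, the $\kappa$'s and the $a_i$'s are meant to rule out.
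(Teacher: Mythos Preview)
Your argument is correct and follows the paper's strategy: from the two expressions $B=\sigma_t(\hat Y_\infty)\,\hat Y_\infty^{-1}=x\,\overline{G}_0\,\sigma_t(\hat Y_0)\,\hat Y_0^{-1}G_0^{-1}$ one reads off $B(\infty,t)=I_{2m}$, $B(0,t)=O$, and the pole set. The paper is terser---it simply asserts that the common poles are $qa_1t,qa_2t$ with order at most $m$ (presumably via $A^{-1}=\mathrm{adj}(A)/\det A$ and $\mathrm{ord}_{\alpha_j}\det A=m$)---whereas you spell out the intersection of the two pole families and localize the singularity through the functional relation for $\hat Y_0$. In fact your Smith-form step proves more than you claim: since $A(x,qt)^{-1}$ has only a \emph{simple} pole at each $\alpha_j(qt)$, so does $B$, which already yields $B(x,t)=x(xI+B_0)\big/\big((x-qa_1t)(x-qa_2t)\big)$---the content of the \emph{next} proposition, which the paper derives separately from the compatibility condition. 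Your ``in particular $\le m$'' is thus a harmless underclaim. The determinantal cross-check is fine; the factor $\det\overline{G}_0/\det G_0$ you omit is constant in $x$ and absorbed into your $c$.
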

\begin{proof}
Substituting \eqref{eq:local_sol_0} and \eqref{eq:local_sol_infty} into \eqref{eq:defB},
we see that $B(x,t)$ is equal to
\begin{align}
\hat{Y}_\infty(x,qt) \hat{Y}_\infty(x,t)^{-1}=\overline{G}_0
 \hat{Y}_0(x,qt) x^{\overline{D}_0} x^{-D_0} \hat{Y}_0(x,t)^{-1} G_0^{-1}.
\end{align}
Therefore, as a meromorphic function of $x$ on $\mathbb{P}^1 \setminus \{ 0, \infty \}$,
we find that $B(x,t)$ has the following properties:
\begin{itemize}
\item The poles common to both sides are $x=q a_1 t$, $q a_2 t$, and their orders are at most $m$.
\item (LHS)$=(I+O(x^{-1}))(I+O(x^{-1}))=I+O(x^{-1}) \quad (x \to \infty)$.
\item (RHS)$=x\overline{G}_0(I+O(x))(I+O(x)){G_0}^{-1}=O(x) \quad (x \to 0)$.
\end{itemize}
These conditions imply that $B(x,t)$ is a meromorphic function on $\mathbb{P}^1$, namely, a rational function in $x$ of the following form:
\begin{equation}
B(x,t)
=\frac{x\tilde{B}(x,t)}{(x-qa_1t)^m(x-qa_2t)^m}, \quad
\tilde{B}(x,t)=x^{2m-1}+\tilde{B}_{2m-2}x^{2m-2}+\cdots+\tilde{B}_0.
\end{equation}
\end{proof}

Next we consider the compatibility condition
\begin{equation}\label{eq:CC}
A(x,qt)B(x,t)=B(qx,t)A(x,t).
\end{equation}
This reads
\begin{equation}
q^{2m-1}(x-a_1t)^m(x-a_2t)^mA(x,qt)\tilde{B}(x,t)=(x-qa_1t)^m(x-qa_2t)^m\tilde{B}(qx,t)A(x,t).
\end{equation}
It follows that
\begin{align}
\exists L, \  A(x,qt)\tilde{B}(x,t)&=(x-qa_1t)^m(x-qa_2t)^m(A_2x+L), \label{eq:condition1}\\
\exists L', \ \tilde{B}(qx,t)A(x,t)&=q^{2m-1}(x-a_1t)^m(x-a_2t)^m(A_2x+L'), \label{eq:condition2}\\
L&=L'. \label{eq:condition3}
\end{align}
Conversely, it is easy to see that \eqref{eq:condition1}, \eqref{eq:condition2}, and \eqref{eq:condition3} imply the compatibility condition \eqref{eq:CC}.

The following holds.
\begin{prop}
\begin{align}
(x-qa_1t)^{m-1}(x-qa_2t)^{m-1} \mid \tilde{B}(x,t).
\end{align}
\end{prop}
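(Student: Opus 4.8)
The plan is to extract the divisibility from the structure of the compatibility relations \eqref{eq:condition1} and \eqref{eq:condition2} together with the known Smith normal form of $A(x,t)$. Recall that after the unimodular transformations $M_1, M_2$ we have $M_1 A(x,t) M_2 = \mathrm{diag}(I_m, \tilde A(x,t))$ with $\tilde A(x,t) = \prod_{j=1}^4(x-\alpha_j) I_m$, so $A(x,t)$ has elementary divisors $1$ (with multiplicity $m$) and $\prod_{j=1}^4 (x-\alpha_j)$ (with multiplicity $m$). In particular, for a generic zero $\alpha_j$ of $\det A(x,t)$, the matrix $A(\alpha_j, t)$ has rank exactly $m$, i.e. its kernel and cokernel are each $m$-dimensional.

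First I would localize at the deformed points. Evaluate \eqref{eq:condition1} at $x = q\alpha_i$ for $i=1,2$ (so $\alpha_i = a_i t$, and $q\alpha_i = qa_it$ is a root of the right-hand factor). Note that $A(x,qt)$ evaluated at $x = q a_i t$ is $A(qa_it, qt)$; since the zeros of $\det A(x,qt)$ are $q a_1 t, q a_2 t, a_3, a_4$ (because $\alpha_j \mapsto q a_j t$ for $j=1,2$ under $t\mapsto qt$, while $\alpha_j = a_j$ is fixed for $j=3,4$), the point $x=qa_it$ is a genuine zero of $\det A(x,qt)$, so $A(qa_it, qt)$ has an $m$-dimensional kernel and cokernel. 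The right-hand side of \eqref{eq:condition1} vanishes to order $m$ at $x = qa_it$. I would then argue that, because $A(x,qt)$ drops rank only by $m$ at this point (not by $2m$), the factor $\tilde B(x,t)$ must itself vanish to order at least $m-1$ at $x = qa_it$: writing things in terms of the Smith normal form of $A(x,qt)$ near $x=qa_it$, one has $A(x,qt) = P(x)\,\mathrm{diag}(I_{2m-1}, (x-qa_it)\,\text{unit})\,Q(x)$ locally (only one elementary divisor vanishes, and simply), so $A(x,qt)\tilde B(x,t)$ vanishing to order $m$ forces $Q(x)\tilde B(x,t)$ — hence $\tilde B(x,t)$ — to vanish to order at least $m-1$. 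The same argument applied to \eqref{eq:condition2} at $x = a_i t$ (so that $q x = q a_i t$ and $\tilde B(qx,t)$ is evaluated at the vanishing point, while $A(x,t)$ at $x=a_it$ has the rank-$m$ drop) gives the matching order-$(m-1)$ vanishing of $\tilde B(qx,t)$, i.e. again of $\tilde B(x,t)$ at $x=qa_it$ — consistent, and reinforcing the bound.

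The cleaner route, which I would present as the main line, is to use \eqref{eq:condition2} directly: $\tilde B(qx,t) A(x,t) = q^{2m-1}(x-a_1t)^m(x-a_2t)^m(A_2 x + L)$. Since $\det A(x,t)$ has a zero of order $m$ at $x=a_it$ arising from a single elementary divisor $\prod_j(x-\alpha_j)$ repeated $m$ times — i.e. the local Smith form is $\mathrm{diag}(I_m,\, (x-a_it)I_m)\cdot(\text{units})$ — we can write $A(x,t) = R(x)\,(I_m \oplus (x-a_it)I_m)\,S(x)$ locally near $x=a_it$ with $R,S$ holomorphic invertible. Substituting, $\tilde B(qx,t) R(x)\,(I_m\oplus (x-a_it)I_m) = q^{2m-1}(x-a_it)^m(x-a_jt)^m(A_2x+L)S(x)^{-1}$ (with $j\neq i$). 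The right side vanishes to order $m$; the block structure on the left shows that the first $m$ columns of $\tilde B(qx,t) R(x)$ must vanish to order $m$, while the last $m$ columns need only vanish to order $m-1$. Hence $\tilde B(qx,t) R(x)$, and therefore $\tilde B(qx,t)$, is divisible by $(x-a_it)^{m-1}$; doing this for $i=1,2$ and substituting $x \to x$ back (so $qx$ shifts to the variable, $a_it \to qa_it$) gives $(x-qa_1t)^{m-1}(x-qa_2t)^{m-1} \mid \tilde B(x,t)$, as claimed.

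The main obstacle is justifying the local Smith-form / rank argument cleanly: one must be sure that at $x=a_it$ (generic in $t$, using $q\alpha_i\neq\alpha_j$) the matrix $A(a_it,t)$ really has rank exactly $m$ and that the vanishing of $\det A$ to order $m$ is ``spread'' as $m$ copies of a simple factor rather than concentrated in a single Jordan-type block — this is exactly what $S_{\mathrm{div}} = m,m,m,m$ encodes, and it is the content of \eqref{eq:Smith_relation}, so I would cite that. A secondary subtlety is bookkeeping the $q$-shift between the two arguments $x$ and $qx$ of $\tilde B$ and between $A(x,t)$ and $A(x,qt)$; I would handle this by stating once that the zeros of $\det A(x,qt)$ are $\{qa_1t, qa_2t, a_3, a_4\}$ and the zeros of $\det A(x,t)$ are $\{a_1t,a_2t,a_3,a_4\}$, so that the ``deformed'' poles $qa_1t, qa_2t$ of $B(x,t)$ align with zeros of $\det A(x,qt)$ on one side and, after the shift $x\mapsto qx$, with zeros of $\det A(x,t)$ on the other. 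Everything else is the routine matrix-polynomial division already used in the preceding propositions.
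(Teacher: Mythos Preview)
Your main argument (the ``cleaner route'' in the second paragraph) is correct and is essentially the same idea as the paper's proof: both use the Smith normal form of $A$ to peel off the required factor. The paper uses \eqref{eq:condition1} together with the explicit global decomposition $\overline{M_1}A(x,qt)\overline{M_2}=\mathrm{diag}(I_m,\prod_j(x-\overline{\alpha_j})I_m)$, inverts it, and reads off the factor $(x-qa_1t)^{m-1}(x-qa_2t)^{m-1}$ directly; you use \eqref{eq:condition2} and a local Smith form at each $x=a_it$, then shift $x\mapsto qx$. These are symmetric variants of one argument.

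One correction: in your first exploratory paragraph you write the local Smith form of $A(x,qt)$ as $P(x)\,\mathrm{diag}(I_{2m-1},(x-qa_it))\,Q(x)$ and assert ``only one elementary divisor vanishes, and simply.'' That is wrong here --- the spectral type $S_{\mathrm{div}}=m,m,m,m$ means exactly $m$ elementary divisors vanish simply at each $\alpha_j$, so the correct local form is $\mathrm{diag}(I_m,(x-qa_it)I_m)$, which is precisely what you use in your main argument. The erroneous form happens to give the same order bound in this case, but you should drop that first version and present only the correct one.
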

\begin{proof}
By the compatibility condition, we have
\begin{equation}
A(x,qt)\tilde{B}(x,t)=(x-qa_1t)^m(x-qa_2t)^m(A_2x+L).
\end{equation}
By the relations \eqref{eq:SmithA} and \eqref{eq:Smith_relation}, we have
\begin{align}
\overline{M_1}^{-1}
\begin{pmatrix}
I_m & O \\
O & \prod_{j=1}^4(x-\overline{\alpha_j})I_m
\end{pmatrix}
\overline{M_2}^{-1}
\tilde{B}(x,t)=(x-qa_1t)^m(x-qa_2t)^m(A_2x+L).
\end{align}
Thus we obtain
\begin{align}
\tilde{B}(x,t)&=(x-qa_1t)^{m-1}(x-qa_2t)^{m-1} \nonumber \\
&\quad \times \overline{M_2}
\begin{pmatrix}
(x-qa_1t)(x-qa_2t)I_m & O \\
O & (x-a_3)^{-1}(x-a_4)^{-1}I_m
\end{pmatrix}
 \overline{M_1}(A_2x+L) \nonumber \\
&=(x-qa_1t)^{m-1}(x-qa_2t)^{m-1}
\frac{M(x)}{(x-a_3)(x-a_4)},
\end{align}
where $M(x)$ is some $2m \times 2m$ polynomial matrix in $x$.
Since $(x-qa_1t)^{m-1}(x-qa_2t)^{m-1}$ and $(x-a_3)(x-a_4)$ are relatively prime,
we find $(x-a_3)(x-a_4) \mid M(x)$.
Thus we have
\begin{equation}
\tilde{B}(x,t)=(x-qa_1t)^{m-1}(x-qa_2t)^{m-1}(x+B_0)
\end{equation}
for some matrix $B_0$.
\end{proof}
From the above discussion, $B(x,t)$ is written as 
\begin{equation}\label{eq:Bmat}
B(x,t)=\frac{x(x I+B_0)}{(x-qa_1t)(x-qa_2t)}.
\end{equation}
Here $B_0$ is a $2m \times 2m$ matrix independent of $x$.
We partition it conformably with $A(x,t)$:
\begin{equation}
B_0=
\begin{pmatrix}
B_{11} & B_{12} \\
B_{21} & B_{22}
\end{pmatrix},
\end{equation}
where $B_{ij}$ is $m \times m$.
$B_0$ is determined through the compatibility condition in the next subsection.

\subsection{Derivation of a $q$-analogue of the matrix $P_\mathrm{VI}$}\label{sec:deformation}

Consider the matrix $B(x,t)$ given by \eqref{eq:Bmat}.
In the same manner as in Section~\ref{sec:determination_of_B},
we can show that the compatibility condition $A(x,qt)B(x,t)=B(qx,t)A(x,t)$ is equivalent to the following conditions
\begin{align}
\exists L, \  A(x,qt)(x+B_0)&=(x-qa_1t)(x-qa_2t)(A_2x+L), \label{eq:condition1_new}\\
\exists L', \ (qx+B_0)A(x,t)&=q(x-a_1t)(x-a_2t)(A_2x+L'), \label{eq:condition2_new}\\
L&=L'. \label{eq:condition3_new}
\end{align}
First we assume \eqref{eq:condition1_new}, \eqref{eq:condition2_new}, and \eqref{eq:condition3_new}.
The conditions \eqref{eq:condition1_new} and \eqref{eq:condition2_new} are respectively equivalent to
\begin{align}
A(qa_it,qt)\left( qa_it I_{2m}+B_0 \right)&=O \quad (i=1,2), \label{eq:left_kernel} \\
\left( qa_it I_{2m}+B_0 \right)A(a_it,t)&=O \quad (i=1,2). \label{eq:right_kernel}
\end{align}
The conditions \eqref{eq:left_kernel} and \eqref{eq:right_kernel} imply
\begin{align}
B_0&=-qa_it I_{2m}+
\begin{pmatrix}
I_m \\
\Phi^i
\end{pmatrix}
\Lambda_i
\begin{pmatrix}
\Psi^i & I_m
\end{pmatrix} \quad (i=1,2), \\
\Phi^1&=\kappa_1\{ q^{-1}\kappa_1^{-1}(\overline{F}-qa_2t)\overline{G}^{-1}-qa_1t+\overline{\bm{\alpha}} \}K^{-1}\overline{W}^{-1}, \\
\Phi^2&=\kappa_1\{ q^{-1}\kappa_1^{-1}(\overline{F}-qa_1t)\overline{G}^{-1}-qa_2t+\overline{\bm{\alpha}} \}K^{-1}\overline{W}^{-1}, \\
\Psi^1&=K\{ -a_1t+G_2(F-a_1t)^{-1}+\bm{\beta} \}K^{-1}W^{-1}, \\
\Psi^2&=K\{ -a_2t+G_2(F-a_2t)^{-1}+\bm{\beta} \}K^{-1}W^{-1}.
\end{align}
The column vectors of
\begin{equation}
\begin{pmatrix}
I_m \\
\Phi^i
\end{pmatrix}
\end{equation}
form a basis of the kernel of left multiplication by $A(qa_it, qt)$, 
and the row vectors of
\begin{equation}
\begin{pmatrix}
\Psi^i & I_m
\end{pmatrix}
\end{equation}
form a basis of the kernel of right multiplication by $A(a_it, t)$. 
$\Lambda_1$ and $\Lambda_2$ are $m \times m$ matrices.
Comparing the $(1,2)$-block of
\begin{equation}\label{eq:B0eq}
B_0=
-qa_1t I_{2m}+
\begin{pmatrix}
I_m \\
\Phi^1
\end{pmatrix}
\Lambda_1
\begin{pmatrix}
\Psi^1 & I_m
\end{pmatrix}=
-qa_2t I_{2m}+
\begin{pmatrix}
I_m \\
\Phi^2
\end{pmatrix}
\Lambda_2
\begin{pmatrix}
\Psi^2 & I_m
\end{pmatrix},
\end{equation}
we have $\Lambda_1=\Lambda_2=B_{12}$.

On the other hand, by comparing the coefficients of $x^2$ of both sides of \eqref{eq:condition1_new},
we obtain
\begin{equation}\label{eq:L}
A_2B_0+\overline{A_1}=L-q(a_1+a_2)t A_2.
\end{equation}
Also, from the coefficients of $x^2$ of \eqref{eq:condition2_new} we have
\begin{equation}\label{eq:L'}
qA_1+B_0A_2=qL'-q(a_1+a_2)t A_2.
\end{equation}
Eliminating $L=L'$ from \eqref{eq:L} and \eqref{eq:L'}, we have
\begin{equation}\label{eq:B0eq-2}
(qA_2)B_0-B_0A_2=q(A_1-\overline{A_1})+q(1-q)(a_1+a_2)tA_2.
\end{equation}
From the relation \eqref{eq:B0eq-2} we obtain
\begin{align}
B_{11}&=\frac{q}{q-1}\left\{ \overline{W}K(\overline{\bm{\alpha}}+\overline{F})K^{-1}\overline{W}^{-1}-WK(\bm{\alpha}+F)K^{-1}W^{-1} \right\}
-q(a_1+a_2)t I_m, \label{eq:B11}\\
B_{12}&=q(W-\overline{W})K(q\kappa_1-K)^{-1}. \label{eq:B12}
\end{align}

Comparing the $(2,2)$-block of \eqref{eq:B0eq}, we have
\begin{equation}\label{eq:phieq}
(\Phi^1-\Phi^2)B_{12}=q(a_1-a_2)t.
\end{equation}
Substituting \eqref{eq:B12} into the above, we obtain
\begin{equation}\label{eq:Weq1} 
W^{-1}\overline{W}=q\kappa_1(\overline{G}-K^{-1})^{-1}\left( \overline{G}-\frac{1}{q\kappa_1} \right)K^{-1}.
\end{equation}
Comparing the $(1,1)$-block of \eqref{eq:B0eq}, we have
\begin{equation}\label{eq:psieq}
B_{12}(\Psi^1-\Psi^2)=q(a_1-a_2)t.
\end{equation}
Similarly, we get
\begin{equation}\label{eq:Weq2} 
W^{-1}\overline{W}=
K(q\kappa_1-K)^{-1}\{ -KG_2+q\kappa_1(F-a_1t)(F-a_2t) \}
\{ (F-a_1t)(F-a_2t)-G_2 \}^{-1}K^{-1}(q\kappa_1-K)K^{-1}.
\end{equation}
Equating the right-hand sides of \eqref{eq:Weq1} and \eqref{eq:Weq2}, we obtain
\begin{equation}\label{eq:Geq}
\overline{G}KG=\frac{1}{q\kappa_1}(F-a_1t)(F-a_2t)(F-a_3)^{-1}(F-a_4)^{-1}.
\end{equation}
Comparing the $(2,1)$-block of \eqref{eq:B0eq}, we have
\begin{equation}\label{eq:21rel}
\Phi^1B_{12}\Psi^1=\Phi^2B_{12}\Psi^2.
\end{equation}
On the other hand, using the relations \eqref{eq:phieq} and \eqref{eq:psieq}, we have
\begin{align}
&\Phi^1B_{12}\Psi^1=\Phi^1(q(a_1-a_2)t+B_{12}\Psi^2)=
q(a_1-a_2)t \Phi^1+(q(a_1-a_2)t+\Phi^2B_{12})\Psi^2 \nonumber \\
&=q(a_1-a_2)t(\Phi^1+\Psi^2)+\Phi^2B_{12}\Psi^2.
\end{align}
Thus \eqref{eq:21rel} implies
\begin{equation}
\Phi^1+\Psi^2=O.
\end{equation}
It follows that
\begin{equation}\label{eq:alphabar}
\overline{\bm{\alpha}}=-q^{-1}\kappa_1^{-1}(\overline{F}-qa_2t)\overline{G}^{-1}+qa_1t-\kappa_1^{-1}\Psi^2\overline{W}K.
\end{equation}
By the equation \eqref{eq:B11} and $B_{11}=-qa_2t+B_{12}\Psi^2$, we have
\begin{equation}\label{eq:alphabar2}
\overline{W}K(\overline{\bm{\alpha}}+\overline{F})K^{-1}\overline{W}^{-1}=
WK(\bm{\alpha}+F)K^{-1}W^{-1}+(q-1)a_1t+\frac{q-1}{q}B_{12}\Psi^2.
\end{equation}
Eliminating $\overline{\bm{\alpha}}$ using \eqref{eq:alphabar} and \eqref{eq:alphabar2}, we obtain
\begin{equation}\label{eq:Feq}
\overline{F}KF=
\frac{\theta_1\theta_2}{\kappa_1a_1a_2}\left( \overline{G}-t\frac{a_1a_2}{\theta_1} \right)\left( \overline{G}-t\frac{a_1a_2}{\theta_2} \right)
\left( \overline{G}-\frac{1}{q\kappa_1} \right)^{-1}\left( \overline{G}-\rho \right)^{-1}.
\end{equation}
Then $B_{ij}$'s are determined as follows:
\begin{align}
B_{11}&=
qWK(I-\overline{G}K)^{-1}\overline{G}K \left[ K^{-1}\overline{G}^{-1}\{ F-(a_1+a_2)t \}+\bm{\beta} \right]K^{-1}W^{-1}, \\
B_{12}&=
qWK(I-\overline{G}K)^{-1}\overline{G}, \\
B_{21}&=q\kappa_1\left\{ q^{-1}\kappa_1^{-1}(\overline{F}-qa_2t)\overline{G}^{-1}-qa_1t+\overline{\bm{\alpha}} \right\}
(I-q\kappa_1\overline{G})^{-1} \nonumber \\
&\quad \times \overline{G}K
\left\{ K^{-1}\overline{G}^{-1}( F-a_2t )-a_1t+\bm{\beta} \right\}K^{-1}W^{-1}, \\
B_{22}&=\left[ q^{-1}\kappa_1^{-1} \{ \overline{F}-q(a_1+a_2)t \}\overline{G}^{-1}+\overline{\bm{\alpha}} \right]
q\kappa_1\overline{G}(I-q\kappa_1\overline{G})^{-1}.
\end{align}

Conversely, by direct calculation, we can show that the equations \eqref{eq:Weq1}, \eqref{eq:Geq}, and \eqref{eq:Feq}
are also the sufficient condition to the compatibility condition.
Thus we have the following
\begin{thm}
The compatibility condition $A(x,qt)B(x,t)=B(qx,t)A(x,t)$ is equivalent to
\begin{align}
\overline{F}KF&=
\frac{\theta_1\theta_2}{\kappa_1a_1a_2}\left( \overline{G}-t\frac{a_1a_2}{\theta_1} \right)\left( \overline{G}-t\frac{a_1a_2}{\theta_2} \right)
\left( \overline{G}-\frac{1}{q\kappa_1} \right)^{-1}\left( \overline{G}-\rho \right)^{-1}, \label{eq:Feq_thm}\\
\overline{G}KG&=\frac{1}{q\kappa_1}(F-a_1t)(F-a_2t)(F-a_3)^{-1}(F-a_4)^{-1}, \label{eq:Geq_thm}\\
W^{-1}\overline{W}&=q\kappa_1(\overline{G}-K^{-1})^{-1}\left( \overline{G}-\frac{1}{q\kappa_1} \right)K^{-1}.
\end{align}
\end{thm}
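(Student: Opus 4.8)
The plan is to show both implications, using the structural results already established in Sections~\ref{sec:determination_of_B} and~\ref{sec:deformation}. For the forward direction, I would simply collect what has already been derived: starting from the compatibility condition $A(x,qt)B(x,t)=B(qx,t)A(x,t)$ with $B(x,t)$ of the form \eqref{eq:Bmat}, the equivalent polynomial identities \eqref{eq:condition1_new}--\eqref{eq:condition3_new} were obtained, and from them the kernel conditions \eqref{eq:left_kernel}--\eqref{eq:right_kernel}, hence the two parametrizations of $B_0$; comparing blocks of \eqref{eq:B0eq} together with the $x^2$-coefficient relation \eqref{eq:B0eq-2} yielded \eqref{eq:B11}, \eqref{eq:B12}, and then \eqref{eq:Weq1}, \eqref{eq:Weq2}, \eqref{eq:Geq}, and finally \eqref{eq:Feq}. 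So the forward implication is essentially a bookkeeping summary of the preceding subsection: compatibility $\Rightarrow$ \eqref{eq:Feq_thm}, \eqref{eq:Geq_thm}, and the $W$-equation.

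The substantive content is the converse: assuming \eqref{eq:Weq1}, \eqref{eq:Geq}, \eqref{eq:Feq}, one must recover the full compatibility condition. My approach would be to reverse-engineer the logic. First, define $B_0$ (equivalently the blocks $B_{ij}$) by the explicit formulas listed just before the theorem statement, define $B(x,t)$ by \eqref{eq:Bmat}, and define $L$ by \eqref{eq:L}. Then I would verify \eqref{eq:condition1_new}, \eqref{eq:condition2_new}, \eqref{eq:condition3_new} directly. Since \eqref{eq:condition1_new} and \eqref{eq:condition2_new} are polynomial identities in $x$ of degree $3$ whose top coefficients ($x^3$) automatically match ($A_2$ on both sides), it suffices to check the $x^2$, $x^1$, $x^0$ coefficients; the $x^2$ coefficient is how $L$ and $L'$ are defined, so \eqref{eq:condition3_new} reduces to checking that the same $L$ works, i.e. to the relation \eqref{eq:B0eq-2}, which is equivalent to \eqref{eq:B12} plus \eqref{eq:B11}. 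The remaining lower-order coefficients amount to the vanishing conditions \eqref{eq:left_kernel}, \eqref{eq:right_kernel} (i.e. $A(qa_it,qt)(qa_itI+B_0)=O$ and $(qa_itI+B_0)A(a_it,t)=O$ for $i=1,2$), because a cubic polynomial matrix that agrees with $(x-qa_1t)(x-qa_2t)(A_2x+L)$ in its $x^3$ and $x^2$ coefficients and vanishes at $x=qa_1t,qa_2t$ must equal it. Thus the converse reduces to: (i) the block identities defining $B_{ij}$ are consistent with both parametrizations of $B_0$ coming from \eqref{eq:left_kernel}--\eqref{eq:right_kernel}, and (ii) those parametrizations are themselves solvable — which is exactly where \eqref{eq:Weq1}, \eqref{eq:Geq}, \eqref{eq:Feq} enter, since equating the two expressions for $B_0$ in \eqref{eq:B0eq} block by block produced precisely \eqref{eq:phieq}$\leftrightarrow$\eqref{eq:Weq1}, \eqref{eq:psieq}$\leftrightarrow$\eqref{eq:Weq2}, \eqref{eq:21rel}$\leftrightarrow$ the $\Phi^1+\Psi^2=O$ relation, and \eqref{eq:B11} together with $B_{11}=-qa_2t+B_{12}\Psi^2$ gave \eqref{eq:Feq}.

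Concretely, I would organize the converse as follows. Step 1: from \eqref{eq:Weq1} recover \eqref{eq:B12} and define $W^{-1}\overline W$, hence $\overline W$ up to the $\mathrm{Stab}(K)$ gauge; check \eqref{eq:phieq} holds by substituting \eqref{eq:B12}. Step 2: using \eqref{eq:Geq} (which is \eqref{eq:Weq1}=\eqref{eq:Weq2} rearranged) verify \eqref{eq:psieq}, hence that $B_{12}(\Psi^1-\Psi^2)=q(a_1-a_2)t$. Step 3: using \eqref{eq:Feq} — obtained by eliminating $\overline{\bm\alpha}$ between \eqref{eq:alphabar} and \eqref{eq:alphabar2} — confirm that $\Phi^1+\Psi^2=O$, hence the $(2,1)$-block relation \eqref{eq:21rel}, so that the two parametrizations of $B_0$ in \eqref{eq:B0eq} genuinely coincide in all four blocks. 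Step 4: conclude that $qa_itI+B_0$ annihilates $A(qa_it,qt)$ on the left (resp. $A(a_it,t)$ on the right) for $i=1,2$ — this is built into how $B_0$ was parametrized via the explicit kernels $\begin{pmatrix}I_m\\ \Phi^i\end{pmatrix}$ and $\begin{pmatrix}\Psi^i & I_m\end{pmatrix}$ — giving \eqref{eq:left_kernel} and \eqref{eq:right_kernel}. Step 5: combine with the matching $x^3$ and $x^2$ coefficients to get \eqref{eq:condition1_new}, \eqref{eq:condition2_new} with a common $L$, hence \eqref{eq:condition3_new}, hence \eqref{eq:CC}.

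The main obstacle I anticipate is Step 3 and the self-consistency checks in Steps 2--3: one must be sure that after imposing \eqref{eq:Weq1}, \eqref{eq:Geq}, \eqref{eq:Feq}, all the block comparisons in \eqref{eq:B0eq} and the coefficient comparisons in \eqref{eq:condition1_new}--\eqref{eq:condition2_new} are simultaneously satisfied, with no hidden extra constraint — in other words, that the count of equations exactly matches and nothing is overdetermined. This is the content of the phrase \emph{"by direct calculation, we can show that"} preceding the theorem, and in practice it means carrying through the substitutions of the explicit $B_{ij}$ formulas and the $\overline{\bm\alpha},\overline{\bm\beta},\ldots$ expressions into the lower-order coefficients of \eqref{eq:condition1_new} and \eqref{eq:condition2_new} and watching everything cancel using \eqref{eq:commutation_relation}, \eqref{eq:qFuchsrel}, and the three asserted nonlinear equations. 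The noncommutativity of the matrix entries makes this bookkeeping delicate — factor ordering must be tracked carefully — but no conceptually new idea beyond what is already in the section is required.
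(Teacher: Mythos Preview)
Your proposal is correct and follows essentially the same route as the paper: the forward implication is exactly the derivation carried out in Section~\ref{sec:deformation}, and the converse is the ``direct calculation'' the paper alludes to just before the theorem, which you have organized in more detail (defining $B_0$ via the explicit $B_{ij}$ formulas, verifying the two parametrizations \eqref{eq:B0eq} coincide under the three nonlinear equations, and then recovering \eqref{eq:condition1_new}--\eqref{eq:condition3_new} from the kernel conditions plus the matching top coefficients). Your interpolation argument for the lower-order coefficients---that a cubic matrix polynomial matching the target in its $x^3,x^2$ coefficients and vanishing at the two distinct points $qa_1t,qa_2t$ must equal the target---is sound and uses the standing assumption $a_1\neq a_2$.
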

We call the equations \eqref{eq:Feq_thm} and \eqref{eq:Geq_thm} (with \eqref{eq:commutation_relation}) $q$-matrix $P_{\mathrm{VI}}$
since they have a continuous limit $q \to 1$ to the matrix $P_{\mathrm{VI}}$ (see Section~\ref{sec:CL}).
Note that this system can be regarded as a non-abelian analogue of Jimbo-Sakai's $q$-$P_{\mathrm{VI}}$, so we also call it matrix $q$-$P_{\mathrm{VI}}$.

Although this system appears to have eight parameters ($\theta_i$'s, $\kappa_i$'s, and $a_i$'s with
a single relation),
the number of parameters can be reduced to five by rescaling $F$, $G$, and $t$.

\section{Continuous limit}\label{sec:CL}

As expected, the system \eqref{eq:Feq_thm} and \eqref{eq:Geq_thm} can be viewed as a $q$-analogue of the matrix $P_{\mathrm{VI}}$ \eqref{eq:MP6-q} and \eqref{eq:MP6-p}.
That is, taking the limit $q \to 1$, one can obtain \eqref{eq:MP6-q} and \eqref{eq:MP6-p} from \eqref{eq:Feq_thm} and \eqref{eq:Geq_thm}. 
In fact, let us define the parameter $\varepsilon$ by $q=e^{-\varepsilon}$.
We set
\begin{align}
\theta_i=e^{-\varepsilon \sigma_i} \ (i=1,2), \quad a_i=e^{\varepsilon \zeta_i} \ (i=1, \ldots, 4), \quad
\kappa_i=e^{\varepsilon \mu_i} \ (i=1,2,3).
\end{align}
Note that the zeros $\alpha_1, \alpha_2$ of $A(x, t)$ tend to $t$ and $\alpha_3, \alpha_4$ tend to 1 when $\varepsilon$ goes to 0.
Moreover, we introduce new dependent variables $Q$ and $P$,
which are related to $F$ and $G$ by
\begin{align}
F=\tilde{Q}, \quad
G=e^{\varepsilon(\zeta_2+\zeta_4+\sigma_2)}e^{\varepsilon(\tilde{Q}\tilde{P}+I)}(\tilde{Q}-e^{\varepsilon \zeta_1} t I)(\tilde{Q}-e^{\varepsilon \zeta_4} I)^{-1}
\end{align}
and
\begin{align}
\tilde{Q}&=(Q-I)^{-1}(Q-tI), \\
\tilde{P}&=\frac{1}{t-1}(Q-I)\left\{ P(Q-I)+\zeta_1+\zeta_4+\mu_1+\sigma_1-(\zeta_1-\zeta_2)Q^{-1}+(\sigma_1-\sigma_2)(t-1)(Q-tI)^{-1} \right\}.
\end{align}

Then, by taking a limit $\varepsilon \to 0$, we find that $Q$, $P$ satisfy the following equations:
{\small
\begin{align}
&t(t-1)\frac{dQ}{dt} \nonumber \\
&=(Q-t)PQ(Q-1)+Q(Q-1)P(Q-t) \nonumber \\
&\quad+(\sigma_1-\sigma_2+1)Q(Q-1)+(\zeta+2\mu_1+\sigma_1+\sigma_2-1)Q(Q-t)+(\zeta_1-\zeta_2)(Q-1)(Q-t), \\
&t(t-1)\frac{dP}{dt} \nonumber \\
&=-(Q-1)P(Q-t)P-P(Q-t)PQ-PQ(Q-1)P \nonumber \\
&\quad-\left[(\sigma_1-\sigma_2+1)\{P(Q-1)+QP\}+(\zeta+2\mu_1+\sigma_1+\sigma_2-1)\{P(Q-t)+QP\}
+(\zeta_1-\zeta_2)\{P(Q-t)+(Q-1)P\}\right] \nonumber \\
&\quad-(\zeta_1+\zeta_3+\sigma_1+\mu_1)(\zeta_1+\zeta_4+\sigma_1+\mu_1),
\end{align}
}

\vspace{-5mm}
\noindent
where $\zeta:=\zeta_1+\cdots+\zeta_4$.
These equations coincide with \eqref{eq:MP6-q} and \eqref{eq:MP6-p} with the following correspondence of the parameters:
\begin{align}
&\sigma_1-\sigma_2 = \theta^0, \ \zeta_3-\zeta_4 = \theta^1, \ \zeta_1-\zeta_2 = \theta^t, \ 
\mu_i+\zeta_2+\zeta_4+\sigma_2 = \theta^\infty_i \ (i=1,2,3).
\end{align}

Expanding \eqref{eq:commutation_relation} with respect to $\varepsilon$ and
taking the coefficient of $\varepsilon^1$, we have a commutation relation between $P$ and $Q$:
\begin{equation}
[P, Q]=(\zeta_1+\cdots+\zeta_4+\sigma_1+\sigma_2+\mu_1)I_m+M, \quad
M=
\mathrm{diag}(\overbrace{\mu_2, \ldots, \mu_2}^{m-1}, \mu_3).
\end{equation}

The linear system \eqref{eq:linear_qmatPVI} also admit the continuous limit in a similar way.
To see this, we first change the dependent variable $Y$ to $Z(x)=f(x)Y(x)$, where $f(x)$ is a solution of the following $q$-difference equation
\begin{equation}
f(qx)=\frac{f(x)}{(x-1)(x-t)}.
\end{equation}
For example, we can take
\begin{equation}
f(x)=(x; q)_\infty (x/t; q)_\infty \frac{\vartheta_q(x)}{\vartheta_q(x/t)},
\end{equation}
where
\begin{equation}
(a; q)_\infty =\lim_{n \to \infty} \prod_{j=0}^{n-1}(1-aq^j), \quad
\vartheta_q(x)=\prod_{n=0}^\infty (1-q^{n+1})(1+xq^n)(1+x^{-1}q^{n+1}).
\end{equation}
Then we have
\begin{align}
\frac{Z(x)-Z(qx)}{(1-q)x}=\frac{1}{\varepsilon x(1+O(\varepsilon))}\left\{ I_{2m}-\frac{1}{(x-1)(x-t)}A(x) \right\}Z(x).
\end{align}
Put $W=\varepsilon U^{-1}(Q-I)$.
Define matrices $\mathsf{A}_0$, $\mathsf{A}_1$, and $\mathsf{A}_t$ by
\begin{equation}
\lim_{\varepsilon \to 0}\frac{1}{\varepsilon x}\left\{ I_{2m}-\frac{1}{(x-1)(x-t)}A(x) \right\}=
\frac{\mathsf{A}_0}{x}+\frac{\mathsf{A}_1}{x-1}+\frac{\mathsf{A}_t}{x-t}.
\end{equation}
Then the matrices $\mathsf{A}_0$, $\mathsf{A}_1$, $\mathsf{A}_t$ (almost) coincide with \eqref{eq:LDE_mPVI_res}.
More precisely, we have
\begin{align}
\mathsf{A}_0-\sigma_2 I_{2m}=\mathcal{A}_0, \quad \mathsf{A}_1-\zeta_4 I_{2m}=\mathcal{A}_1, \quad \mathsf{A}_t-\zeta_2 I_{2m}=\mathcal{A}_t.
\end{align}
Thus the resulting system of linear differential equations
\begin{align}
\frac{dZ}{dx}=\left( \frac{\mathsf{A}_0}{x}+\frac{\mathsf{A}_1}{x-1}+\frac{\mathsf{A}_t}{x-t} \right)Z
\end{align}
is of spectral type $m,m \, ; \, m,m \, ; \, m,m \, ; \, m,m-1,1$.
In this case, Conjecture~\ref{thm:correspondence_of_ST} holds.


\begin{thebibliography}{}




\bibitem{B2}
P. Boalch,
Simply-laced isomonodromy systems,
{\it Publ. Math. Inst. Hautes \'Etudes Sci.}\ \textbf{116}, No. 1 (2012), 1--68.

\bibitem{Car}
R. D. Carmichael,
The general theory of linear $q$-difference equations,
\textit{Amer. J. Math.}\ \textbf{34}, No. 2 (1912), 147--168.

\bibitem{FS1}
K. Fuji and T. Suzuki,
Drinfeld-Sokolov hierarchies of type $A$ and fourth order Painlev\'{e} systems,
{\it Funkcial. Ekvac.}~{\bf 53} (2010), 143--167. 

\bibitem{G}
R. Garnier,
Sur des \'equations diff\'erentielles du troisi\`eme ordre
dont l'int\'egrale g\'en\'erale est uniforme 
et sur une classe d'\'equations nouvelles d'ordre sup\'erieur
dont l'int\'egrale g\'en\'erale a ses points critiques fixes,
\textit{Ann. Sci. \'Ec. Norm. Sup\'er.}~{\bf 29} (1912), 1--126.

\bibitem{HKNS}
K. Hiroe, H. Kawakami, A. Nakamura, and H. Sakai,
4-dimensional Painlev\'e-type equations,
\textit{MSJ Memoirs}\ \textbf{37} (2018).

\bibitem{JS}
M. Jimbo and H. Sakai,
A $q$-analog of the sixth Painlev\'e equation,
\textit{Lett. Math. Phys.}~\textbf{38} (1996), 145--154.

\bibitem{K2}
H. Kawakami,
Matrix Painlev\'e systems,
{\it J. Math. Phys.}\ {\bf 56} (2015), doi.org/10.1063/1.4914369.

\bibitem{K3}
H. Kawakami,
Four-dimensional Painlev\'e-type equations associated with ramified linear equations III: Garnier systems and FS systems,
\textit{SIGMA}\ \textbf{13} (2017), 096, 50 pages.

\bibitem{K4}
H. Kawakami,
Four-dimensional Painlev\'e-type equations associated with ramified linear equations II: Sasano systems,
\textit{Journal of Integrable Systems}, Volume 3, Issue 1 (2018), xyy013.

\bibitem{K5}
H. Kawakami,
Four-dimensional Painlev\'e-type equations associated with ramified linear equations I: Matrix Painlev\'e systems,
to appear in \textit{Funkcial. Ekvac.}, arXiv:1608.03927.


\bibitem{Ma}
T. Masuda,
A $q$-analogue of the higher order Painlev\'e type equations with the affine Weyl group symmetry of type $D$,
\textit{Funkcial. Ekvac.}~\textbf{58} (2015), 405--430.

\bibitem{Os}
T. Oshima,
Fractional calculus of Weyl algebra and Fuchsian differential equations,
\textit{MSJ Memoirs}~\textbf{28} (2012).

\bibitem{Sak2001}
H. Sakai,
Rational surfaces associated with affine root systems and geometry of the Painlev\'e equations,
{\it Comm. Math. Phys.}~\textbf{220} (2001), 165--229.

\bibitem{Sak2005}
H. Sakai,
A $q$-analog of the Garnier system,
\textit{Funkcial. Ekvac.}~\textbf{48} (2005), 273--297.


\bibitem{SY}
H. Sakai and M. Yamaguchi,
Spectral types of linear $q$-difference equations and $q$-analog of middle convolution,
\textit{Int. Math. Res. Not.}, Volume~\textbf{2017}, Issue 7 (2017), 1975--2013.

\bibitem{Ss}
Y. Sasano,
Coupled Painleve VI systems in dimension four with affine Weyl group symmetry of type $D\sp {(1)}\sb 6$. II,
{\it  RIMS K$\hat{o}$ky$\hat{u}$roku Bessatsu}~{\bf B5} 
(2008), 137--152.

\bibitem{Sz}
T. Suzuki,
A $q$-analogue of the Drinfeld-Sokolov hierarchy of type $A$ and $q$-Painlev\'e system,
\textit{AMS Contemp. Math.}~\textbf{651} (2015), 25--38.

\bibitem{Ts2010}
T. Tsuda,
On an Integrable System of $q$-Difference Equations Satisfied by the Universal Characters: Its Lax Formalism and an Application to $q$-Painlev\'e Equations,
\textit{Comm. Math. Phys.}~\textbf{293} (2010), 347--359.

\bibitem{Ts2014}
T. Tsuda,
UC hierarchy and monodromy preserving deformation,
{\it J. Reine Angew. Math.}~\textbf{690} (2014), 1--34.

\end{thebibliography}
\end{document}